\pgfplotsset{width=7cm,compat=1.18}
\setlist{noitemsep,labelwidth=*,leftmargin=*,align=left}
\setlist[enumerate,1]{label=(\alph*)}
\setlist[description]{font=\normalfont,leftmargin=!}
\renewcommand{\SetProgSty}[1]{\renewcommand{\ProgSty}[1]{\textnormal{\csname#1\endcsname{##1}}\unskip}}
\newcommand{\algass}{\ensuremath{\leftarrow}}
\renewcommand{\O}[1]{\ensuremath{\mathcal{O}(#1)}}
\theoremstyle{definition}
\newtheorem{theorem}{Theorem}[section]
\newtheorem{corollary}[theorem]{Corollary}
\newtheorem{definition}[theorem]{Definition}
\newtheorem{remark}[theorem]{Remark}
\newtheorem{lemma}[theorem]{Lemma}
\newtheorem{problem}[theorem]{Problem}
\newtheorem{observation}[theorem]{Observation}
\newtheorem{assumption}[theorem]{Assumption}
\theoremstyle{definition}
\newtheorem{example}[theorem]{Example}
\newcommand{\M}{\ensuremath{\mathcal{M}}}
\newcommand{\F}{\ensuremath{\mathcal{F}}}
\newcommand{\T}{\ensuremath{\mathcal{T}}}
\newcommand{\I}{\ensuremath{I}}
\newcommand{\R}{\ensuremath{\mathbb{R}}}
\newcommand{\Set}[1]{\left\{ #1 \right\}}
\pgfplotsset{compat=1.18}
\begin{document}
	
\begin{frontmatter}
		
    \title{Multi-parametric matroids - Applications to interdiction and weight set decomposition}
    
    \author[a]{Nils Hausbrandt}\corref{mycorrespondingauthor}
    \cortext[mycorrespondingauthor]{Corresponding author}
    \ead{nils.hausbrandt@math.rptu.de}
    \author[a]{Stefan Ruzika}
    \ead{stefan.ruzika@math.rptu.de}
      
    \address[a]{Department of Mathematics, University of Kaiserslautern-Landau, 67663 Kaiserslautern, Germany}
    
    \begin{abstract}
        In this article, we investigate the multi-parametric matroid problem.
        The weights of the elements of the matroid's ground set depend linearly on an arbitrary but fixed number of parameters, each of which is taken from a real interval.
        The goal is to compute a minimum weight basis for each  possible combination of the parameters.
        For this problem, we propose an algorithm that requires a polynomial number of independence tests and discuss two useful applications.
        First, the algorithm can be applied to solve a multi-parametric version of a special matroid interdiction problem, and second, it can be utilized to compute the weight set decomposition of the multi-objective (graphic) matroid problem.
        For the latter, we asymptotically improve the current state-of-the-art algorithm by a factor that is almost proportional to the number of edges of the graphic matroid.
    \end{abstract}
    
    \begin{keyword}
        Matroid, Interdiction, Multi-parametric Optimization, Multi-objective matroids, Multi-objective Minimum Spanning Tree, Weight Set Decomposition
    \end{keyword}
    
\end{frontmatter}

\section{Introduction}
Matroid theory was initially established by \cite{whitney1935on} and has long been a well-established field of modern mathematics, cf.\ \cite{wilson1973introduction, welsh2010matroid, oxley2011matroid}.
Matroids generalize fundamental concepts such as linear independence in linear algebra or important basics of graph theory and provide numerous applications, for instance in combinatorics, coding theory or network theory, cf.\ \cite{el2010index, tamo2016optimal, ouyang2021importance}.

Parametric matroids are characterized by the property that each weight of the elements of the matroid's ground set is given as a function depending on some real-valued parameters from a given parameter interval.
The goal is to compute a basis of minimum weight for each possible combination of the parameters.
A solution to the problem is given by a partition of the parameter interval into subsets such that each subset is assigned a unique basis that is optimal for all parameter values within the subset.
The dependence of the minimum weight basis on the parameters naturally implies two closely related research questions.
In addition to the algorithmic computation of a solution set, the question of its size is also of crucial importance.
The problem only admits a polynomial time algorithm if the number of subsets (and corresponding optimal bases) needed to decompose the parameter interval is polynomial in the size of its input.

In the literature, attention has so far mainly been paid to the special case of one-parametric graphic matroids, where the weight of each edge of the graph depends on exactly one parameter, cf.\ \cite{fernandez1996using, agarwal1998parametric}.
\cite{gusfield1979bound} and \cite{dey1998improved} proved upper bounds on the cardinality of the solution set and, recently, \cite{eppstein2023stronger} improved his lower bound, cf.\ \cite{eppstein1995geometric}.
For arbitrary one-parametric matroids, tight bounds where established by \cite{eppstein1995geometric, dey1998improved, katoh2002parametric}.
There exists several general methods that do not explicitly use the special matroid structure of the problem, cf.\ \cite{eisner1976mathematical,bazgan2022approximation, helfrich2022approximation}.
We refer the reader to \cite{nemesch2025survey} for a survey on general parametric optimization, in which the close relationship between multi-parametric and multi-objective optimization is emphasized.
This relationship implies that the multi-parametric minimum spanning tree problem admits a solution set of polynomial cardinality, cf.\ \cite{seipp2013adjacency}.
The algorithm of \cite{seipp2013adjacency} for the weight set decomposition in the multi-objective minimum spanning tree problem proposes a polynomial time algorithm for the multi-parametric minimum spanning tree problem.
To our knowledge, however, there is no work that addresses the multi-parametric version of arbitrary matroids.

This article intends to close this gap by investigating the multi-parametric matroid problem.
The weight of each element of the matroid's ground set is assumed to depend linearly on an arbitrary but fixed number of real-valued parameters.
The goal is to compute a minimum weight basis for each possible combination of the parameters.
We show that the cardinality of the solution set of the multi-parametric matroid problem is polynomial in the input length.
We develop an algorithm that solves the problem with a polynomial number of independence test.
Our algorithm has two useful applications.

First, it can be applied to solve the multi-parametric version of the matroid one-interdiction problem (\cite{hausbrandt2024parametric}) using only a polynomial number of independence test.
Here, the goal is to compute, for each possible parameter value, an optimal interdiction strategy, i.~e.\ a so-called most vital element.
This is an element that maximally increases the weight of a minimum weight basis when removed from the matroid.
Besides developing an algorithm, we show that a solution to the problem only requires a polynomial number of interdiction strategies.

Second, our algorithm can be applied to the multi-objective version of the (graphic) matroid problem.
We obtain a polynomial time algorithm for the computation of all extreme supported spanning trees and the corresponding weight set decomposition for an arbitrary but fixed number of objectives.
In contrast to other general weight set decomposition algorithms, cf.\ \cite{benson2000outcome,przybylski2010recursive,halffmann2020inner}, the running time of our algorithm is easy to analyze and can be stated explicitly. 
We use adjacency of optimal bases in combination with a breadth-first search technique on the weight set components to asymptotically improve upon the algorithm of \cite{seipp2013adjacency}.
Consequently, our algorithm is asymptotically faster than all existing methods that allow an easy and explicit computation of their asymptotic running time, cf.\ \cite{seipp2013adjacency, correia2021finding}.

The remainder of this article is organized as follows.
In \cref{sec:prelims_p_1}, we introduce the notation, define the multi-parametric matroid problem, and recall some required concepts from computational geometry.
In \cref{sec:para_matroid_algos_p_1}, we adapt the algorithm of \cite{seipp2013adjacency} in a parametric context to arbitrary matroids and present our improved procedure.
In \cref{sec:wsd_p_1}, we apply our algorithm to compute the weight set decomposition in the (graphic) matroid problem.
\cref{sec:interdiciton_p_1} explains the application of our algorithm to the multi-parametric matroid one-interdiction problem.

\section{Preliminaries}\label{sec:prelims_p_1}
In this section, we explain the notation and introduce the required definitions for the multi-parametric matroid problem.
In addition, we recall the necessary concepts from computational geometry.
For a singleton $\Set{a}$ and a set $A$, we use the notation $A+a$ or $A-a$ instead of $A\cup\Set{a}$ or $A\setminus\Set{a}$.

\textbf{Matroids.}
Let $E$ be a finite set and $\emptyset\neq\F\subseteq 2^E$ be a non-empty subset of the power set of $E$.
The tuple $\M = (E,\F)$ is called \emph{matroid} if the following properties hold:
\begin{enumerate}
    \item The empty set $\emptyset$ is contained in $\F$.
    \item If $A\in\F$ and $B\subseteq A$, then also $B\in\F$.
    \item If $A,B \in\F$ and $\vert B\vert < \vert A\vert$, then there exists an element $a\in A\setminus B$ such that $B + a\in\F$.
\end{enumerate}
A subset $F\in\F$ of $E$ is called \emph{independent}, while all other subsets of $E$ are called \emph{dependent}.
A \emph{basis} is an inclusion-wise maximal independent set.
The \emph{rank} $rk(\M)$ of the matroid is the cardinality of a basis.
We define $\vert E\vert\coloneqq m$ and $rk(\M)\coloneqq k$.
We denote the time required to perform a single independence test, i.~e.\ a test to determine whether a subset of $E$ is independent, by $f(m)$.

\textbf{Multi-parametric matroids.}
In the multi-parametric matroid problem, each element $e\in E$ of the matroid's ground set is associated with a weight function depending linearly on $p$ parameters, where $p$ is a fixed natural number.
More precisely, the \emph{$p$-parametric weight} of an element $e\in E$ is given by $$w(e,\lambda)\coloneqq w(e,\lambda_1,\dotsc,\lambda_p)= a_e + \lambda_1 b_{1,e} +\dotsc+\lambda_p b_{p,e},$$ where $a_e, b_{1,e},\dotsc, b_{p,e}\in \mathbb{Q}$ are rational numbers.
For each $i=1,\dotsc,p$, the parameter $\lambda_i$ is taken from a real interval $\I_i\subseteq\mathbb{R}$.
The Cartesian product $\I\coloneqq\I_1\times\dotsc\times\I_p$ of all intervals is called the \emph{(p-dimensional) parameter interval}.
The weight of a basis $B$ naturally depends on the parameters and is defined accordingly as $w(B,\lambda)\coloneqq w(B,\lambda_1,\dotsc,\lambda_p)\coloneqq\sum_{e\in B} w(e,\lambda)$.
For a fixed $\lambda\in\I$, a minimum weight basis $B_\lambda^*$ at $\lambda$ is a basis $B^*$ of $\M$ with $w(B^*,\lambda)\leq w(B,\lambda)$ for all bases $B$ of $\M$. 
The $p$-parametric matroid problem can be stated as follows.
\begin{problem}[$p$-Parametric matroid problem]\label{prob:matroid_p}
    Given a matroid~$\M$ with $p$-parametric weights $w(e,\lambda)$ and the parameter interval~$\I\subseteq \mathbb{R}^p$, compute, for each possible parameter vector $\lambda\in\I$, a minimum weight basis $B_\lambda^*$.
\end{problem}
The corresponding objective function value is specified by the optimal value function, which maps the parameter vector to the weight of a minimum weight basis.
\begin{definition}[Optimal value function]
    The function $w\colon I \to \mathbb{R}, \lambda \mapsto w(B_\lambda^*,\lambda)$ mapping the parameter vector to the weight of a minimum weight basis is called \emph{optimal value function} (of the $p$-parametric matroid problem). 
\end{definition}
To get a better understanding of the structure of the $p$-parametric matroid problem, it is crucial to determine the sorting of all weights $w(e,\lambda)$ over the entire parameter interval $\I$.
To do so, we consider the intersection of each pair of weight functions $w(e,\lambda)$ of the elements $e\in E$.
This results in a set of separating hyperplanes that divide the $\R^p$ into open half-spaces.
\begin{definition}[Separating hyperplane (\cite{seipp2013adjacency})]
	For two elements $e,f\in E$ with weight functions $w(e,\lambda)$ and $w(f,\lambda)$, the \emph{separating hyperplane} $h(e,f)\subseteq\R^p$ is defined as $h(e,f)~=~\Set{\lambda\in\mathbb{R}^p\colon w(e,\lambda)=w(f,\lambda)}$. It divides $\mathbb{R}^p$ into two open half-spaces
    \begin{enumerate}
        \item[] $h^<(e,f)=\Set{\lambda\in\mathbb{R}^p\colon w(e,\lambda)<w(f,\lambda)}$ and
        \item[] $h^>(e,f)=\Set{\lambda\in\mathbb{R}^p\colon w(e,\lambda)>w(f,\lambda)}$,
    \end{enumerate}
	where the former consists of all parameter vectors for which $e$ attains a smaller weight than $f$ and, vice versa, the latter contains all parameter vectors for which $f$ has smaller weight than $e$.
    The set of all separating hyperplanes is denoted by $H^=\coloneqq\Set{h(e,f)\colon e,f\in E}$.
\end{definition}
We can now state our assumptions which are essentially without loss of generality.
\begin{assumption}\label{ass:p_1}
	Throughout this paper, we make the following assumptions.
    \begin{enumerate}
        \item No two separating hyperplanes in $H^=$ are identical.
        \item No hyperplane $h(e,f)\in H^=$ is vertical, meaning that it contains a line parallel to the $p$-th coordinate axis.
    \end{enumerate}
\end{assumption}
The first assumption ensures that at the transition from one half-space $h^<(e,f)$ ($h^>(e,f)$) via the hyperplane $h(e,f)$ into the other half-space $h^>(e,f)$ ($h^<(e,f)$) it is known how the sorting of the weights of the elements in $E$ changes.
This property turns out to be useful later in our algorithm.
The second assumption is a technical requirement for an algorithm that we need later to compute the polyhedra obtained by intersecting the hyperplanes $h(e,f)\in H^=$, cf.\ \cref{theo:compute_arrangement_p_1}.
If a hyperplane $h(e,f)$ is vertical or two hyperplanes $h(e,f)$ and $h(e,g)$ or $h(e,h)$ are identical, this can be fixed by an infinitesimally small change of the weight $w(e,\lambda)$.
Such a perturbation has only an infinitesimally small effect on the sorting of the weights and, thus, on the optimal value function $w$.
Further, for a given instance, it is easy to check whether one of the assumptions is not fulfilled.
Note, that the first assumption also implies that no two weight functions $w(e,\lambda)$ and $w(f,\lambda)$ are identical.

As in the one-parametric matroid problem, the optimal value function maintains piecewise linearity, continuity, and concavity.
\begin{lemma}\label{lem:opt_value_fct_concave_p_1}
    The optimal value function $w$ is piecewise linear, continuous and concave.
\end{lemma}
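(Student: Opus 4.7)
The plan is to express the optimal value function as a pointwise minimum of finitely many affine functions and invoke standard properties of such minima.

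First I would observe that for every fixed basis $B$ of $\M$, the map $\lambda \mapsto w(B,\lambda) = \sum_{e\in B}\bigl(a_e + \lambda_1 b_{1,e} + \dots + \lambda_p b_{p,e}\bigr)$ is an affine function of $\lambda \in \I$, since it is a finite sum of affine functions of $\lambda$. In particular, each such $w(B,\cdot)$ is linear, continuous, and (trivially) concave on $\I$.

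Next, since the ground set $E$ is finite, the collection $\mathcal{B}$ of bases of $\M$ is finite. By definition of a minimum weight basis,
\[
w(\lambda) \;=\; w(B_\lambda^*,\lambda) \;=\; \min_{B\in\mathcal{B}} w(B,\lambda).
\]
Thus $w$ is the pointwise minimum of finitely many affine functions on $\I$.

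Finally, I would invoke the elementary fact from convex analysis that the pointwise minimum of finitely many affine functions is piecewise linear, continuous, and concave. Concavity follows because for $\lambda,\mu \in \I$ and $t\in[0,1]$,
\[
w(t\lambda + (1-t)\mu) = \min_{B} \bigl(t\, w(B,\lambda) + (1-t)\, w(B,\mu)\bigr) \geq t\,\min_{B} w(B,\lambda) + (1-t)\,\min_{B} w(B,\mu) = t\,w(\lambda) + (1-t)\,w(\mu).
\]
Continuity of $w$ on $\I$ follows from the continuity of each $w(B,\cdot)$ together with the finiteness of $\mathcal{B}$, and piecewise linearity follows by partitioning $\I$ according to which basis attains the minimum: on each region $\{\lambda \in \I : w(B,\lambda) \leq w(B',\lambda)\ \forall B'\in\mathcal{B}\}$, which is a polyhedron defined by finitely many linear inequalities, $w$ coincides with the affine function $w(B,\cdot)$.

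There is no real obstacle here; this is a structural observation that reduces the claim to standard properties of the lower envelope of finitely many hyperplanes. The only minor care needed is to ensure that the argument uses only that $\mathcal{B}$ is finite and that each $w(B,\cdot)$ is affine, both of which are immediate from the definitions given earlier in the paper.
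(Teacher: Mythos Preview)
Your proposal is correct and follows exactly the same approach as the paper: write $w$ as the pointwise minimum (lower envelope) of the finitely many affine functions $w(B,\cdot)$ indexed by bases, and conclude piecewise linearity, continuity, and concavity from this representation. Your version is slightly more detailed than the paper's, which simply asserts the conclusion from the lower-envelope representation, but the argument is identical.
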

\begin{proof}
    Each of the at most $\binom{m}{k}$ many bases $B$ of $\M$ yields a linear function $w_B\colon I \to \mathbb{R},\lambda\mapsto w(B,\lambda)$.
    The optimal value function $w$ is given by the lower envelope, i.~e.\ the point-wise minimum, $w(\lambda)=\min\Set{w_B(\lambda)\colon B \; \text{basis of} \; \M}$ of all these functions $w_B$.
    It follows that $w$ is piecewise-linear, continuous, and concave.
\end{proof}
Piecewise linearity and continuity of the optimal value function $w$ ensure that its graph in $\R^{p+1}$ consists of a collection of $p$-dimensional polyhedra, which are connected by faces of smaller dimension.
Each $p$-dimensional polyhedron belongs to a unique minimum weight basis $B^*$, for which the function $w_{B^*}$ forms the lower envelope $w$ for all parameter vectors from the corresponding polyhedron.
To put it differently, if a basis is optimal for one point in the relative interior of the polyhedron, it is optimal for all points of the polyhedron.
The concavity causes each basis $B$ of $\M$ to contribute at most one $p$-dimensional polyhedron to the graph, resulting in an upper bound of $\O{m^k}$.
Our goal is to derive a tighter bound on the number of these polyhedra in the worst case, as this naturally determines the complexity of the problem.
For this reason, we often consider the most general case $\I=\R^p$.
A smaller interval may only yield fewer polyhedra in the graph of $w$ and, thus, to a shorter running time of our algorithms.
In \cref{sec:para_matroid_algos_p_1}, we investigate the structure of \cref{prob:matroid_p} in more detail, resulting in an upper bound of polynomial size on the number of $p$-dimensional polyhedra in the graph of $w$.

\cref{ex:opt_value_fct_p_1} shows an illustration of an optimal value function of an instance of the $p$-parametric graphic matroid problem, which we now define.

\textbf{Multi-parametric minimum spanning trees.}
All results presented in this article hold for arbitrary (parametric) matroids.
To illustrate our concepts, we use the special case of graphic matroids, which we now introduce.
Let $G=(V,E)$ be an undirected and connected graph with node set $V$ and edge set $E$.
We set $\vert V\vert\coloneqq n$ and $\vert E\vert\coloneqq m$.
A \emph{spanning tree} $T$ of $G$ is a tree connecting all nodes of $G$.
We denote the set of all spanning trees of $G$ by $\T$.
Each edge $e\in E$ is associated with a $p$-parametric weight $w(e,\lambda)\coloneqq w(e,\lambda_1,\dotsc,\lambda_p)= a_e + \lambda_1 b_{1,e}+\dotsc+\lambda_p b_{p,e},$ where $a_e, b_{1,e},\dotsc, b_{p,e}\in \mathbb{Q}$ and $\lambda\in\I$.
A minimum weight basis at $\lambda\in\I$ corresponds to a minimum spanning tree $T_\lambda^*$ at $\lambda$.
In analogy to \cref{prob:matroid_p}, we obtain the following formulation.
\begin{problem}[$p$-Parametric minimum spanning tree problem]\label{prob:mst_p}
    Given a graph $G=(V,E)$ with $p$-parametric edge weights $w(e,\lambda)$ and the parameter interval~$\I~\subseteq~\mathbb{R}^p$, compute, for each possible parameter vector $\lambda\in\I$, a minimum spanning tree $T_\lambda^*\in\T$.
\end{problem}
Accordingly, the optimal value function $w$ maps the parameter vector to the weight of a minimum spanning tree.
We finish this subsection with an example of a 2-parametric minimum spanning tree problem and illustrate its optimal value function.
\begin{example}\label{ex:opt_value_fct_p_1}
    Consider the graph from \cref{fig:ex_graph_parametric_p_1} with four edges $e,f,g,$ and $h$, each having a 2-parametric weight.
    The parameter interval is given by $\I\coloneqq\R^2$.
    \cref{fig:opt_value_fct_p_1} shows a fragment of the corresponding piecewise linear, continuous and concave optimal value function $w$, which consists of four 2-dimensional polyhedra marked by different levels of gray.
    The spanning tree $T_1=\Set{e,h}$ is optimal for (all points of) the polyhedron bottom left, $T_2=\Set{h,g}$ is optimal for the polyhedron bottom right, $T_3=\Set{f,g}$ for the polyhedron top right and $T_4=\Set{e,f}$ for the polyhedron top left.  
    We show in \cref{ex:neighborhood_bases_p_1} how these polyhedra and the corresponding minimum spanning trees can be efficiently computed.
\end{example}

\begin{figure}
    \centering
    \begin{tikzpicture}[scale=0.7] 
        \node (A) at (-1,0) [circle,draw,inner sep=2.2pt] {};
        \node (B) at (6,0) [circle,draw,inner sep=2.2pt] {};
        \node (C) at (2.5,4) [circle,draw,inner sep=2.2pt] {};
        \path[thick] (A) edge [bend left=25] node[above] {\footnotesize$w(e,\lambda)=6\lambda_1+4\lambda_2$} (B);
        \path[thick] (A) edge [bend right=25] node[below] {\footnotesize$w(g,\lambda)=1+2\lambda_1+8\lambda_2$} (B);
        \path[thick] (C) edge[bend right=25] node[left] {\footnotesize$w(f,\lambda)=2+4\lambda_1+2\lambda_2$} (A);
        \path[thick] (C) edge[bend left=25] node[right] {\footnotesize$w(h,\lambda)=6+4\lambda_1+12\lambda_2$} (B);
    \end{tikzpicture}
    \caption{An instance of the 2-parametric minimum spanning tree problem.}
    \label{fig:ex_graph_parametric_p_1}

\vspace{20pt}

    \begin{tikzpicture}[scale=1.4]
        \begin{axis}[xmin=-6.5, xmax=6.5,axis line style ={line width=.5pt,draw=gray!40},enlargelimits={abs=0.5},grid=both,major grid style={line width=.2pt,draw=gray!10}
        ] 
    
        \addplot3 [surf, mesh/rows=2, fill=black!3, fill opacity=1, line width=0.1] coordinates {
            (-3,-3.25,-76) (3,-3.25,-64)
            (-0.15,-0.4,-1.9) (4,-0.4,23)
        };
        \addplot3 [surf, mesh/rows=2, fill=black!18, fill opacity=1, line width=0.1] coordinates {
            (-5,-3,-76) (-4,-0.4,-40.4) 
            (-3,-3.25,-76) (-0.15,-0.4,-1.9)
        };
        \addplot3 [surf, mesh/rows=2, fill=black!29, fill opacity=1, line width=0.1] coordinates {
            (-4,-0.4,-40.4) (-0.15,-0.4,-1.9)
            (-3,2.75,-11.5) (3,2.75,48.5)
        };
        \addplot3 [surf, mesh/rows=2, fill=black!12, fill opacity=1, line width=0.1] coordinates {
            (-0.15,-0.4,-1.9) (4,-0.4,23)
            (3,2.75,48.5) (6,2,59)
        };
        \draw[line width=0.5] (-0.15,-0.4,-1.9) -- (4,-0.4,23);
        \draw[line width=1,black!3] (3,-3.25,-64) -- (4,-0.4,23);
        \draw[line width=0.7,dotted] (3,-3.25,-64) -- (4,-0.4,23);
        \draw[line width=1,black!3] (-3,-3.25,-76) -- (3,-3.25,-64);
        \draw[line width=0.7,dotted] (-3,-3.25,-76) -- (3,-3.25,-64);
        \draw[line width=0.5] (-0.15,-0.4,-1.9) -- (-3,-3.25,-76);
    
        \draw[line width=1,black!18] (-5,-3,-76) -- (-4,-0.4,-40.4);
        \draw[line width=0.7,dotted] (-5,-3,-76) -- (-4,-0.4,-40.4);
        \draw[line width=0.5] (-0.15,-0.4,-1.9) -- (-4,-0.4,-40.4);
        \draw[line width=1,black!18] (-5,-3,-76) -- (-3,-3.25,-76);
        \draw[line width=0.7,dotted] (-5,-3,-76) -- (-3,-3.25,-76);
    
        \draw[line width=1,black!29] (-4,-0.4,-40.4) -- (-3,2.75,-11.5);
        \draw[line width=0.7,dotted] (-4,-0.4,-40.4) -- (-3,2.75,-11.5);
        \draw[line width=0.5] (-0.15,-0.4,-1.9) -- (3,2.75,48.5);
        \draw[line width=1,black!29] (-3,2.75,-11.5) -- (3,2.75,48.5);
        \draw[line width=0.7,dotted] (-3,2.75,-11.5) -- (3,2.75,48.5);
    
        \draw[line width=1,black!12] (6,2,59) -- (3,2.75,48.5);
        \draw[line width=0.7,dotted] (6,2,59) -- (3,2.75,48.5);
        \draw[line width=1,black!12] (6,2,59) -- (4,-0.4,23);
        \draw[line width=0.7,dotted] (6,2,59) -- (4,-0.4,23);
        
        \end{axis}
        
        \node (m) at (1.725,1.94) {}; 
    
        \node (mr) at (1.82,2.01) {};
        \node (l) at (-0.25,0.419) {};
        \draw[->,line width=0.5] (mr) -- (l);
        
        \node (o) at (1.725,4.8) {};
        \node (mu) at (1.725,1.85) {};
        \draw[->,line width=0.5] (mu) -- (o);
    
        \node (ml) at (1.813,1.876) {};
        \node (r1) at (1.835,1.86) {};
        \draw[line width=0.5] (ml) -- (r1);
    
        \node (r2) at (3.21,1.69) {};
        \node (r3) at (5.75,1.26) {};
        \draw[->,line width=0.5] (r2) -- (r3);
    
        \node (r4) at (5.9,1.25) {$\lambda_1$};
        \node (l1) at (-0.36,0.4) {$\lambda_2$};
        \node (o1) at (1.725,4.94) {$w(\lambda)$};
        
    \end{tikzpicture}
    \caption{A fraction of the piecewise linear, continuous and concave optimal value function of the instance of the 2-parametric minimum spanning tree problem from \cref{fig:ex_graph_parametric_p_1}.}
    \label{fig:opt_value_fct_p_1}
\end{figure}

\textbf{Arrangements of hyperplanes.}
As mentioned before, it is essential to know how the sorting of all weights behaves over the entire parameter interval $\I$.
Therefore, it is crucial to intersect the separating hyperplanes $h(e,f)\in H^=$.
To obtain this intersection and the regions between the separating hyperplanes, we use the concept of arrangement of hyperplanes, see \cite{edelsbrunner1987algorithms,orlik2013arrangements}.
\begin{definition}[\cite{edelsbrunner1986constructing}]
	Let $H=\Set{h_1,\dotsc,h_q}$ denote a set of $q$ hyperplanes in $\mathbb{R}^d$, for $d\geq2$.
	Let $h^<_i$ and $h^>_i$ denote the open half-spaces above and below $h_i$, for $i=1,\dotsc,q$.
	The \emph{arrangement} $A(H)$ consists of faces $f$ with $$f=\bigcap\limits_{i=1,\dotsc,q} f(h_i),$$ where $f(h_i)\in\Set{h_i,h^<_i,h^>_i}$.
    A face $f$ is called a \emph{$t$-face}, for $0\leq t\leq d$, if its dimension is $t$.
    A $d$-face in $\R^d$, i.~e.\ a full-dimensional face, is called a \emph{cell}.
\end{definition}
The cells of the arrangement of the hyperplanes resulting from the intersection of each pair of weights $w(e,\lambda)$ and $w(f,\lambda)$ are of crucial importance. 
We see in the next section that each cell can be assigned a unique basis which is optimal for each parameter value in the relative interior of the cell.
It is therefore essential to have a bound on the number of cells of an arrangement.
\begin{theorem}[\cite{seipp2013adjacency}]\label{theo:number_of_cells_p_1}
    An arrangement of $q$ hyperplanes in $\mathbb{R}^d$ contains a maximum of $\sum_{i=0}^{d} \binom{q}{i} \leq 2q^d$ many cells.
\end{theorem}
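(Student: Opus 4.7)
The plan is to establish the stronger exact identity $C(q,d) \leq \sum_{i=0}^{d} \binom{q}{i}$ by double induction on $q$ and $d$, where $C(q,d)$ denotes the maximum number of cells in any arrangement of $q$ hyperplanes in $\R^d$; the asymptotic bound $2q^d$ will then follow by a direct geometric-series estimate.

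For the base cases, I would use that the empty arrangement leaves $\R^d$ as a single cell, giving $C(0,d) = 1 = \binom{0}{0}$, and that $q$ points on the real line create $q+1$ intervals, giving $C(q,1) = q + 1 = \binom{q}{0} + \binom{q}{1}$. Both agree with the right-hand side of the target inequality.

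For the inductive step, the key idea is a one-hyperplane insertion: pick any $h \in H$ and compare $A(H)$ with $A(H \setminus \Set{h})$. Reinserting $h$ leaves untouched every cell of $A(H \setminus \Set{h})$ that lies strictly on one side of $h$, and splits into exactly two each cell whose relative interior is met by $h$. The split cells are in bijection with the cells of the induced arrangement formed by the $q-1$ traces $h \cap h_i$ for $h_i \in H \setminus \Set{h}$ inside the $(d-1)$-dimensional affine space $h$. This yields the recursion
\[
C(q,d) \leq C(q-1,d) + C(q-1,d-1),
\]
and Pascal's identity $\binom{q-1}{i} + \binom{q-1}{i-1} = \binom{q}{i}$ (with the convention $\binom{q-1}{-1}=0$) applied term-wise to the inductive hypothesis closes the induction.

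The numerical bound then follows from $\binom{q}{i} \leq q^i$ combined with the geometric-series estimate $\sum_{i=0}^{d} q^i = (q^{d+1}-1)/(q-1) \leq 2 q^d$, valid for $q \geq 2$; the small cases $q \in \Set{0,1}$ are verified directly. The main obstacle is the geometric part of the recursion: one has to argue carefully that a cell of $A(H \setminus \Set{h})$ is split by $h$ if and only if its relative interior meets $h$, and that the collection of such cells biject with the cells of the induced $(d-1)$-dimensional trace arrangement on $h$. Once this geometric correspondence is established, the rest is combinatorially routine.
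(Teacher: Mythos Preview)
The paper does not prove this theorem; it is quoted without proof as a known result from \cite{seipp2013adjacency} (and is in fact a classical fact in combinatorial geometry going back at least to Buck). Your argument via the insertion recursion $C(q,d) \leq C(q-1,d) + C(q-1,d-1)$ together with Pascal's identity is the standard proof and is correct.

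One small slip: the case $q=0$ cannot be ``verified directly'' for the second inequality, since $\sum_{i=0}^{d}\binom{0}{i}=1$ while $2\cdot 0^{d}=0$ for $d\geq 1$. This is harmless because an arrangement of $q$ hyperplanes tacitly assumes $q\geq 1$ (and the paper only ever applies the bound with $q=\binom{m}{2}\geq 1$), but you should either drop $q=0$ from the claimed direct verification or restrict the statement to $q\geq 1$.
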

\cite{edelsbrunner1986constructing} developed an algorithm to compute all faces of an arrangement of hyperplanes $A(H)$.
The iterative algorithm adds one new hyperplane to an existing arrangement per iteration and updates it until all hyperplanes are included.
The algorithm is optimal in the sense that the time to construct the arrangement does not asymptotically exceed the time to store it. 
\begin{theorem}[\cite{edelsbrunner1986constructing}]\label{theo:compute_arrangement_p_1}
    Let $H$ be a set of $q$ hyperplanes in $\mathbb{R}^d$, for $d\geq2$.
    Then, the arrangement $A(H)$ can be computed in $\O{q^d}$ time, and this is optimal.
\end{theorem}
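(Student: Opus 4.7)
The plan is to prove the upper bound by an incremental construction that adds one hyperplane at a time and to prove the lower bound by a simple counting argument against the bound from \cref{theo:number_of_cells_p_1}.

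For the upper bound, I would fix an arbitrary ordering $h_1,\dotsc,h_q$ of the hyperplanes and build $A(\Set{h_1,\dotsc,h_i})$ from $A(\Set{h_1,\dotsc,h_{i-1}})$ by inserting $h_i$. The insertion step has two tasks: (a) locate an initial face of $A(\Set{h_1,\dotsc,h_{i-1}})$ that $h_i$ crosses and traverse along $h_i$ to enumerate all faces of the old arrangement that $h_i$ meets, and (b) split each such face into the pieces lying in $h_i^<$, on $h_i$, and in $h_i^>$, updating the face incidences accordingly. Using an appropriate incidence graph (for instance, linking each $t$-face to the $(t-1)$- and $(t+1)$-faces on its boundary and coboundary), the update of the combinatorial structure near a single affected face can be charged a constant amount of work per face in a bounded-dimension setting.

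The key quantitative input is the zone-type bound: the total combinatorial complexity of the faces of $A(\Set{h_1,\dotsc,h_{i-1}})$ cut by the new hyperplane $h_i$ is $\O{i^{d-1}}$. Accepting this bound, the cost of inserting $h_i$ is $\O{i^{d-1}}$, and summing over $i=1,\dotsc,q$ gives
\[
\sum_{i=1}^{q} \O{i^{d-1}} = \O{q^d},
\]
which is the claimed running time. The main obstacle in a fully rigorous write-up is exactly the zone bound; it is a nontrivial inductive argument on dimension (the intersection of the zone of $h_i$ in $A(\Set{h_1,\dotsc,h_{i-1}})$ with $h_i$ itself is a $(d-1)$-dimensional arrangement to which the induction hypothesis is applied), and one must carefully avoid double-counting when summing face complexities across dimensions.

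For optimality, observe that \cref{theo:number_of_cells_p_1} gives the upper bound $\sum_{i=0}^{d}\binom{q}{i}=\Theta(q^d)$ on the number of cells, and a standard construction (e.g.\ $q$ hyperplanes in general position) attains this bound, so any arrangement of $q$ hyperplanes in $\R^d$ may have $\Omega(q^d)$ faces. Since the algorithm must at least output a distinct representation of each face, its running time is $\Omega(q^d)$, matching the upper bound and establishing optimality.
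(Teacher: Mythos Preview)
The paper does not provide a proof of this theorem at all; it is stated as a cited result from \cite{edelsbrunner1986constructing} and used as a black box throughout. Your sketch correctly outlines the standard incremental construction together with the zone theorem bound and the matching lower bound from the face count, which is indeed the argument in the cited reference, so there is nothing to compare against in the present paper.
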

In the next section, we apply the algorithm to compute the arrangement of the separating hyperplanes from the set $H^=$.
Note, that \cref{ass:p_1} guarantees that these hyperplanes are non-vertical.

In the algorithm of \cite{edelsbrunner1986constructing}, each face is represented by one point from its relative interior.
The arrangement $A(H)$ is then stored using a graph $G'(A(H))$ called \emph{incidence lattice}.
Each face $f\in A(H)$ is represented by a separate node in $G'(A(H))$ and incidences between faces are displayed by edges.
If one is only interested in the cells of the arrangement, the smaller incidence graph proves helpful.
\begin{definition}[Incidence graph (\cite{grunbaum1967convex})]
    For an arrangement $A(H)$ of hyperplanes in $\R^d$, the \emph{incidence graph} $G(A(H))$ is defined as follows.
    For each cell $c\in A(H)$, there exists a node $v_c$ in $G(A(H))$.
    Two nodes $v_{c_1}$ and $v_{c_2}$ are connected by an edge if and only if the two cells $c_1$ and $c_2$ share a common face of dimension $d-1$.
\end{definition}
In the next section, we traverse the incidence graph to compute a minimum weight basis for each node and, thus, each cell of the arrangement.
In order to analyze the running time of this algorithm, we need the following theorem to bound the number of edges in the incidence graph.
\begin{theorem}[\cite{grunbaum1967convex}]\label{theo:number_of_faces_p_1}
    For $0\leq t<d$, the number of $t$-faces of an arrangement of $q$ hyperplanes in $\R^d$ is bounded from above by
    $$\binom{q}{d-t}\sum_{i=0}^{t}\binom{q-d-1+t}{i}.$$
\end{theorem}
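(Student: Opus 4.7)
The plan is to prove the bound by classifying every $t$-face of the arrangement by the unique set of hyperplanes that contains it. Under the general-position hypothesis of \cref{ass:p_1}, each $t$-face of an arrangement of $q$ hyperplanes in $\R^d$ lies on the intersection of exactly $d-t$ of those hyperplanes and no more; conversely, for every choice of $d-t$ hyperplanes, their common intersection is an affine $t$-flat. Thus the first factor $\binom{q}{d-t}$ counts the candidate $t$-flats $L_S$ as $S$ ranges over the $(d-t)$-subsets of $H$.

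For a fixed such subset $S$, the $t$-faces of the arrangement lying on $L_S$ are precisely the top-dimensional cells of the induced arrangement inside $L_S$, formed by intersecting $L_S$ with the remaining $q-(d-t)=q-d+t$ hyperplanes. By general position, each of these intersections is a genuine hyperplane within $L_S$, so I reduce to counting the cells of an arrangement of $q-d+t$ hyperplanes in the $t$-dimensional space $L_S$. A (slightly sharper) variant of \cref{theo:number_of_cells_p_1} applied in dimension $t$ then yields the second factor $\sum_{i=0}^{t}\binom{q-d-1+t}{i}$. Multiplying the two factors gives the claimed bound.

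The main obstacle is to obtain the precise form of the second factor: the naive application of \cref{theo:number_of_cells_p_1} would produce $\sum_{i=0}^{t}\binom{q-d+t}{i}$, which is slightly weaker than the stated bound. To close this gap I would either (i)~carry out a simultaneous induction on $t$ and on the number of hyperplanes via a deletion--contraction argument, removing one hyperplane in $L_S$ and using Pascal's identity $\binom{r}{i}=\binom{r-1}{i}+\binom{r-1}{i-1}$ to telescope the resulting sum across all choices of $S$, or (ii)~appeal to Zaslavsky's formula, which expresses the face numbers of a hyperplane arrangement through its characteristic polynomial and, in the general-position case guaranteed by \cref{ass:p_1}, reduces to the closed form above. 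Either route is routine once the stratification of $t$-faces by their supporting flat is in place, so the combinatorial core of the argument is simply the observation that every $t$-face is a top-dimensional cell of a lower-dimensional restriction of the arrangement, combined with the counting of such restrictions.
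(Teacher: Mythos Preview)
The paper does not prove this statement at all: it is quoted verbatim as a classical result from \cite{grunbaum1967convex} and then only used asymptotically. So there is no ``paper's proof'' to compare against; your task reduces to giving a correct self-contained argument.

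Your overall plan is the standard one and is correct as far as it goes: in a simple arrangement every $t$-face lies on a unique $(d-t)$-fold intersection flat, there are $\binom{q}{d-t}$ such flats, and on each flat the $t$-faces are exactly the cells of the induced arrangement of the remaining $q-d+t$ hyperplanes. Applying the cell-count formula in dimension $t$ gives the factor $\sum_{i=0}^{t}\binom{q-d+t}{i}$, which is precisely the bound that appears in Gr\"unbaum (and Buck, and Edelsbrunner).

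The problem is your final paragraph. What you call the ``naive'' bound is in fact the \emph{correct} bound, and the expression printed in the paper with $q-d-1+t$ in place of $q-d+t$ appears to be a typo. You should not try to ``close the gap'' by deletion--contraction or Zaslavsky, because the sharper-looking formula is simply false as an upper bound. A two-line counterexample: take $q=2$ lines in general position in $\R^2$ and $t=1$. The arrangement has four $1$-faces (each line is split into two rays), while the paper's formula gives
\[
\binom{2}{1}\sum_{i=0}^{1}\binom{2-2-1+1}{i}=2\bigl(\tbinom{0}{0}+\tbinom{0}{1}\bigr)=2,
\]
which is not an upper bound. Your ``naive'' version gives $2(\tbinom{1}{0}+\tbinom{1}{1})=4$, matching the truth. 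Since every use of \cref{theo:number_of_faces_p_1} in the paper is only up to $\mathcal{O}(m^{2p})$, the typo is harmless there, but your proof sketch should stop after deriving $\sum_{i=0}^{t}\binom{q-d+t}{i}$ and note the discrepancy rather than attempt to prove the misstated inequality.
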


\section{Algorithms}\label{sec:para_matroid_algos_p_1}
In this section, we describe two algorithms that solve the multi-parametric matroid problem with a polynomial number of independence tests.
The first algorithm is a generalization of the algorithm of \cite{seipp2013adjacency} for the multi-objective minimum spanning tree problem to arbitrary multi-parametric matroids.
The second algorithm extends this procedure by a special neighborhood relation between minimum weight bases.
In addition, we use a breadth-first search technique to efficiently traverse the cells of the arrangement of the separating hyperplanes defining the sorting of the weight functions.
As a consequence, we improve upon the asymptotic running time of the first algorithm by a factor of at least $m$.

\textbf{Algorithm 1.}
The matroid structure implies that only the sorting of the weights of the elements, but not their numerical value, is relevant for the computation of a minimum weight basis, cf.\ \cite{oxley2011matroid}.
In the $p$-parametric matroid problem, the weights of the elements describe a $p$-dimensional hyperplane in $\R^{p+1}$.
For each pair of weight functions $w(e,\lambda)$ and $w(f,\lambda)$, we consider the separating hyperplane $h(e,f)\in H^=$.
By \cref{ass:p_1}, each such hyperplane has a dimension of $p-1$.
The hyperplane $h(e,f)$ determines where the element $e$ has a smaller weight than $f$.
The intersection of all hyperplanes in $H^=$ thus defines the sorting of the weights over $\I$ and forms an arrangement of hyperplanes $A(H^=)$ in $\R^p$.
There is a one-to-one correspondence between the resulting faces of the arrangement and the sorting of the weights.
If a basis is optimal at a point from the relative interior of a face $f$, it is optimal for all $\lambda\in f$.
As the cells of $A(H^=)$ decompose the parameter interval $I$ and, since we are only interested in one solution for each parameter vector $\lambda\in\I$, we obtain the following observation.
\begin{observation}\label{obs:cells_suffice_p_1}
    It suffices to compute a minimum weight basis at one point from the relative interior of each cell of the arrangement $A(H^=)$ to solve the $p$-parametric matroid problem.
\end{observation}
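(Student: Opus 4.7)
The plan is to show that within each (open) cell $c$ of the arrangement $A(H^=)$, the total strict order of the element weights is constant across $\lambda\in c$, so that a single minimum weight basis serves the whole cell, and then to handle parameters that lie on lower-dimensional faces via a continuity argument.

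First, I would fix a cell $c\in A(H^=)$ and two elements $e,f\in E$. By the definition of a cell, $c$ is the intersection of open half-spaces, one per hyperplane in $H^=$; in particular, either $c\subseteq h^<(e,f)$ or $c\subseteq h^>(e,f)$, so the strict sign of $w(e,\lambda)-w(f,\lambda)$ is constant on $c$. Since $e,f$ were arbitrary, the total strict order of $\Set{w(e,\lambda)\colon e\in E}$ coincides for every $\lambda\in c$; the first part of \cref{ass:p_1} enters implicitly, as it guarantees the distinctness of the hyperplanes $h(e,f)$ and thereby rules out unwanted ties inside a cell.

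Next, I would invoke the matroid greedy algorithm: a minimum weight basis of $\M$ is obtained by processing the elements in non-decreasing order of weight and adding each one that preserves independence, and the resulting basis depends only on this sorted order (cf.\ \cite{oxley2011matroid}). Because the order is constant on $c$, applying the greedy algorithm at any two points $\lambda_1,\lambda_2\in c$ yields the same basis $B^*$, which is a minimum weight basis at both points. Hence computing a minimum weight basis at a single representative $\lambda^*$ from the relative interior of $c$ solves $\cref{prob:matroid_p}$ for every $\lambda\in c$.

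Finally, I would cover the $\lambda\in\I$ that do not lie in any cell. Such a $\lambda$ sits on some separating hyperplane $h(e,f)\in H^=$ and hence on the boundary of at least one cell $c$. For the basis $B^*$ optimal throughout $c$, we have $w(B^*,\mu)\leq w(B,\mu)$ for every basis $B$ and every $\mu\in c$; letting $\mu\to\lambda$ with $\mu\in c$ and using the continuity of the affine maps $\mu\mapsto w(B,\mu)$ gives $w(B^*,\lambda)\leq w(B,\lambda)$, so $B^*$ remains optimal at $\lambda$. I do not anticipate a real obstacle here; the only mildly subtle step is this boundary case, which is settled by continuity, and the conceptual point is simply that the arrangement $A(H^=)$ encodes exactly the data the greedy algorithm consumes.
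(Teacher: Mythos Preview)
Your proposal is correct and follows essentially the same approach as the paper: the paper also argues that the sorting of the weights is constant on each face of $A(H^=)$, so the greedy algorithm returns the same basis throughout, and since the closures of the cells cover $\I$ one optimal basis per cell suffices. Your continuity argument for boundary parameters is a slightly more explicit version of what the paper leaves implicit in the sentence ``If a basis is optimal at a point from the relative interior of a face $f$, it is optimal for all $\lambda\in f$''; one small quibble is that part~(a) of \cref{ass:p_1} is not actually needed to rule out ties \emph{inside} a cell (that follows from the definition of a cell), but this does not affect the validity of your argument.
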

The algorithm of \cite{edelsbrunner1986constructing} does not only compute the faces of an arrangement but also one point from the relative interior of each face.
If the algorithm is applied to the arrangement $A(H^=)$, we only have to run the greedy algorithm for the point of the relative interior of each cell.
We now state \cref{alg:p-parametric_matroid_p_1}, which is a direct adaptation of the algorithm of \cite{seipp2013adjacency} in the parametric context to arbitrary matroids.

\begin{algorithm}[t]
    \LinesNumbered
    \Input{A matroid \M{} with $p$-parametric weights $w(e,\lambda)$ and parameter interval~$I \subseteq \mathbb{R}^p$.}
    \Output{A solution of the $p$-parametric matroid problem.}
    
    Compute the set $H^==\Set{h(e,f)\colon e,f\in E}$\;
    Compute $A(H^=)$ with the algorithm of \cite{edelsbrunner1986constructing}\;
    \For{each cell $c\in A(H^=)$ with corresponding inner point $\lambda_c$}{
        Compute a minimum weight basis $B_{\lambda_c}^*$ at $\lambda_c$\;
    }
    \Return $A(H^=)$ and the minimum weight basis of each cell $c\in A(H^=)$\;
    \caption{An algorithm for the $p$-parametric matroid problem.}
    \label{alg:p-parametric_matroid_p_1}
\end{algorithm}

\begin{theorem}
    Let $p\geq2$.
    \cref{alg:p-parametric_matroid_p_1} solves the $p$-parametric matroid problem in $\O{m^{2p+1}(f(m)+\log m)}$ time.
\end{theorem}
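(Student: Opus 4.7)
The plan is to verify correctness by invoking \cref{obs:cells_suffice_p_1}, and then to bound the running time by adding up the costs of the three stages of \cref{alg:p-parametric_matroid_p_1}. Correctness is essentially immediate: the algorithm returns, for each cell $c\in A(H^=)$, a minimum weight basis at some representative $\lambda_c$ in its relative interior, and \cref{obs:cells_suffice_p_1} states that this is enough to obtain a minimum weight basis for every $\lambda\in\I$. (For a query $\lambda\in\I$, one simply locates the cell containing $\lambda$ and returns the stored basis; on the boundary any adjacent cell works by continuity of $w$.)

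For the running time, I would count the three stages separately. First, $\vert H^=\vert\leq\binom{m}{2}=\O{m^2}$, and each hyperplane $h(e,f)$ is determined by subtracting the coefficients of $w(e,\lambda)$ and $w(f,\lambda)$, which costs $\O{p}=\O{1}$ since $p$ is fixed; hence line~1 takes $\O{m^2}$ time. Second, by \cref{theo:compute_arrangement_p_1} applied with $q=\O{m^2}$ and $d=p\geq 2$, the arrangement $A(H^=)$ together with a representative point in the relative interior of each face can be constructed in $\O{(m^2)^p}=\O{m^{2p}}$ time.

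Third, for the loop in lines~3--4, by \cref{theo:number_of_cells_p_1} the number of cells of $A(H^=)$ is at most $2(m^2)^p=\O{m^{2p}}$. For each cell, given its inner point $\lambda_c$, the weights $w(e,\lambda_c)$ for all $e\in E$ can be evaluated in $\O{m}$ time and sorted in $\O{m\log m}$ time. The greedy algorithm then scans the sorted elements and performs at most $m$ independence tests, each costing $f(m)$, so the basis $B_{\lambda_c}^*$ is obtained in $\O{m(f(m)+\log m)}$ time. Summing over all cells yields $\O{m^{2p}\cdot m(f(m)+\log m)}=\O{m^{2p+1}(f(m)+\log m)}$, which dominates the two earlier stages and hence bounds the total running time.

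I do not anticipate a genuine obstacle here; the argument is essentially a bookkeeping exercise that chains together \cref{obs:cells_suffice_p_1}, \cref{theo:compute_arrangement_p_1} and \cref{theo:number_of_cells_p_1}. The only subtlety worth flagging is that \cref{ass:p_1} is needed to apply \cref{theo:compute_arrangement_p_1} (non-verticality of the hyperplanes) and to guarantee that the sorting of weights is well-defined and constant on the relative interior of each cell, which in turn justifies the use of a single inner point $\lambda_c$ per cell.
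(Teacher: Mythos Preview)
Your proposal is correct and follows essentially the same approach as the paper: invoke \cref{obs:cells_suffice_p_1} for correctness, then bound the running time of the three stages using $\vert H^=\vert\in\O{m^2}$, \cref{theo:compute_arrangement_p_1} for the arrangement, and \cref{theo:number_of_cells_p_1} together with the $\O{m(f(m)+\log m)}$ cost of the greedy algorithm per cell. Your write-up is in fact slightly more detailed than the paper's, e.g.\ in spelling out the evaluation and sorting of the weights at $\lambda_c$ and in flagging the role of \cref{ass:p_1}.
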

\begin{proof}
    By \cref{obs:cells_suffice_p_1} it suffices to consider the cells of the arrangement $A(H^=)$.
    The correctness follows as an optimal basis at a point $\lambda$ from the relative interior of a cell $c\in A(H^=)$ is optimal for all $\lambda\in c$.
    The set $H^=$ contains at most $\binom{m}{2}$ many hyperplanes, each of which can be computed in constant time.
    The algorithm of \cite{edelsbrunner1986constructing} computes the arrangement $A(H^=)$ and one point from the relative interior of each cell $c\in A(H^=)$ in $\O{q^p}$ time, where $q\in \O{m^2}$.
    Finally, we execute one call of the greedy algorithm in $\O{m\log m+mf(m)}$ time for each of the at most $2q^p$ many cells of $A(H^=)$, cf.\ \cref{theo:number_of_cells_p_1}, where $q\in \O{m^2}$.
    In summary, we obtain a total running time of
    $$\O{m^2+m^{2p}+2m^{2p} m (f(m)+\log m)} = \O{m^{2p+1}(f(m)+\log m)}.$$
\end{proof}
\begin{remark}
    In order to obtain the maximum possible number of cells in the arrangement $A(H^=)$, at most 2 of the hyperplanes intersect in the same face of dimension $p-2$.
    However, the fact that two weights are equal on each hyperplane reduces the maximum number of cells in the arrangement.
    For each triple of the form $(h(e,f),h(e,g),h(f,g))$, there exists the face $h(e,f,g)=\Set{\lambda\in\R^p\colon w(e,\lambda)=w(f,\lambda)=w(g,\lambda)}$ of dimension $p-2$ in which these three hyperplanes intersect.
    For $p=2$, an example of an intersection point $h(e,f,g)$ of such a triple can be found in \cref{fig:neighborhood_p_1} at point (0.625,0.375).
    In general, each of these $\binom{m}{3}$ many triples thus reduces the number of cells by at least 1.
    Nevertheless, this reduction has no influence on the asymptotic bound on the maximum number of cells.
\end{remark}
We are now in the position to derive a polynomial bound on the number of $p$-dimensional polyhedra in the graph of the optimal value function $w$.
Recall, that the minimum weight basis is identical for all points in the relative interior of such a polyhedron.
It follows by construction, that the $p$-dimensional polyhedra are defined by the separating hyperplanes in $H^=$.
More precisely, a $(p-1)$-dimensional hyperplane $h$ can only be a facet of such a polyhedron, if $h$ is contained in one of the separating hyperplanes from the set $H^=$.
The $p$-dimensional polyhedra correspond to the cells of the arrangement $A(H^=)$, however, one polyhedron can subsume
different neighboring cells.
Consequently, the number of different $p$-dimensional polyhedra in the graph of $w$ is bounded by the number of cells of $A(H^=)$ and we obtain the following corollary.
\begin{corollary}\label{cor:opt_value_fct_p_1}
    The graph of the optimal value function $w$ contains at most $\O{m^{2p}}$ many $p$-dimensional polyhedra.
\end{corollary}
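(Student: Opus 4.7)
The plan is to leverage the observation, already made in the paragraph preceding the corollary, that every facet of a $p$-dimensional polyhedron in the graph of $w$ must lie on one of the separating hyperplanes of $H^=$. Combined with a count of the cells of the arrangement $A(H^=)$, this will give the claimed bound.

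First, I would formalize the geometric correspondence. Fix a $p$-dimensional polyhedron $P$ in the graph of $w$ and let $B^*$ be the basis optimal on (the projection of) $P$ to $\I$. For any parameter $\lambda$ in the relative interior of $P$'s projection, the sorting of the weights $w(e,\lambda)$ for $e\in E$ fully determines the greedy output $B^*$, and this sorting is constant within each cell of $A(H^=)$. Hence the projection of $P$ to $\I$ is a union of (closures of) cells of $A(H^=)$, and in particular the number of $p$-dimensional polyhedra in the graph of $w$ is bounded above by the number of cells of $A(H^=)$.

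Next, I would bound the number of cells. The set $H^=$ has at most $\binom{m}{2}\in \O{m^2}$ hyperplanes, so by \cref{theo:number_of_cells_p_1} applied with $q\in \O{m^2}$ and $d=p$, the arrangement $A(H^=)$ contains at most
\[
    \sum_{i=0}^{p}\binom{q}{i}\;\leq\;2q^{p}\;\in\;\O{m^{2p}}
\]
many cells. Combining this with the previous paragraph yields the stated bound.

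The only subtle step is the first one: justifying that distinct $p$-dimensional polyhedra of the graph of $w$ cannot be merged into a single cell of $A(H^=)$ (so that the bound really does count them). This is exactly the content of \cref{lem:opt_value_fct_concave_p_1}: the concavity of $w$ together with the one-to-one correspondence between the sorting of the weight functions and the cells of $A(H^=)$ forces each cell to select a single optimal basis, so a cell is contained in the projection of at most one $p$-dimensional polyhedron of the graph of $w$. I do not expect any further obstacle, since the rest is a direct application of the combinatorial bound on arrangements.
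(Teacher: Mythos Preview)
Your proposal is correct and follows essentially the same approach as the paper: the paper also argues that each $p$-dimensional polyhedron of the graph of $w$ is a union of cells of $A(H^=)$ (since facets can only lie on separating hyperplanes in $H^=$), so the number of polyhedra is bounded by the number of cells, and then applies \cref{theo:number_of_cells_p_1} with $q=\binom{m}{2}\in\O{m^2}$ to obtain $\O{m^{2p}}$. Your write-up is slightly more explicit about why a single cell cannot meet two distinct polyhedra, but the underlying reasoning is identical.
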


\textbf{Algorithm 2.}
In the 1-parametric matroid problem, the weights of the elements depend linearly on a real parameter from a one-dimensional parameter interval.
Sweeping through the parameter interval from left to right, one observes that there are only polynomially many points at which the minimum weight basis changes, cf.\ \cite{fernandez1996using, hausbrandt2024parametric}.
In particular, there is a neighborhood relation between the optimal bases before and after such a point.
They differ only by one swap, i.~e.\ their symmetric difference is two.

In the following, we extend this neighborhood relation to the $p$-parametric case and, thus, obtain an asymptotically faster method than \cref{alg:p-parametric_matroid_p_1}.
Since it is sufficient to focus on neighboring cells, we consider the smaller incidence graph $G(A(H^=))$ instead of the incidence lattice $G'(A(H^=))$ from the algorithm of \cite{edelsbrunner1986constructing}.

The principle idea is to first run the greedy algorithm once for a point from the relative interior of an arbitrary cell of the arrangement $A(H^=)$ to compute a minimum weight basis there.
Then, we iteratively pivot through neighboring cells until all cells have been found.
From pivoting from one cell to another, we utilize the fact that, in neighboring cells, only the sorting of two weights $w(e,\lambda)$ and $w(f,\lambda)$ changes.
Thus, a pivot corresponds to updating the minimum weight basis which can be realized by an independence test.
To ensure that all cells are found, we exploit connectivity of $G(A(H^=))$.
\begin{lemma}\label{lem:connectivity_GAH_p_1}
    The incidence graph $G(A(H))$ of an arrangement $A(H)$ of hyperplanes is connected.
\end{lemma}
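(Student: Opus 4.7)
The plan is to show, for any two cells $c_1,c_2$ of $A(H)$, that the nodes $v_{c_1}$ and $v_{c_2}$ are joined by a path in $G(A(H))$, by walking along a suitably chosen line segment from a point of $c_1$ to a point of $c_2$ and recording the sequence of cells visited.

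First, I would pick points $x_1\in c_1$ and $x_2\in c_2$ from the relative interiors of the two cells. The straight segment $[x_1,x_2]$ crosses only finitely many hyperplanes of $H$, namely those $h\in H$ whose defining linear equation takes opposite signs (or zero) on $x_1$ and $x_2$. The potential problem is that the segment could pass through the intersection of two or more hyperplanes at once, which would correspond to a jump through a face of dimension at most $d-2$ and not to an edge of the incidence graph. To avoid this, I would perturb $x_1$ and $x_2$ slightly inside the relative interiors of $c_1$ and $c_2$ so that the segment meets each hyperplane of $H$ in at most one point and meets no two hyperplanes simultaneously. Such a perturbation exists because the set of endpoint pairs producing a bad segment is contained in a finite union of proper algebraic subsets of $c_1\times c_2$ and is therefore of lower dimension, while $c_1\times c_2$ is full-dimensional and open.

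Once the segment $[x_1,x_2]$ is in general position with respect to $H$, I would parametrize it as $\gamma(t)$ for $t\in[0,1]$ and list the crossing times $0<t_1<t_2<\dotsb<t_r<1$ at which $\gamma$ hits some hyperplane of $H$. For $t$ in each open subinterval $(t_{i-1},t_i)$, the point $\gamma(t)$ lies in the relative interior of a single cell $c^{(i)}$ of the arrangement, with $c^{(0)}=c_1$ and $c^{(r)}=c_2$. At each crossing time $t_i$, the point $\gamma(t_i)$ lies on exactly one hyperplane $h\in H$ (by the general-position choice) and hence belongs to the relative interior of a single $(d-1)$-face that is common to $c^{(i-1)}$ and $c^{(i)}$. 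Therefore $\{v_{c^{(i-1)}},v_{c^{(i)}}\}$ is an edge of $G(A(H))$ for each $i=1,\dotsc,r$, and concatenating these edges yields a path from $v_{c_1}$ to $v_{c_2}$.

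The main obstacle is really just the general-position perturbation step; the rest is bookkeeping along the segment. I would justify that step by noting that for each finite collection of two or more hyperplanes from $H$, the set of endpoint pairs $(x_1,x_2)\in c_1\times c_2$ whose connecting segment meets all of them at a common point, together with the set of endpoint pairs whose segment meets some single hyperplane tangentially, forms a proper algebraic subvariety of $c_1\times c_2$. Taking a finite union over all such collections still leaves a dense open set of admissible endpoint pairs, from which we may choose $(x_1,x_2)$. This completes the proof that $G(A(H))$ is connected.
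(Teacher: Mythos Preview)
Your proof is correct and takes a genuinely different route from the paper's. The paper argues by induction on the number of hyperplanes: with one hyperplane the claim is trivial, and when a new hyperplane $h$ is added it shows that any existing path $(v_{c_1},\dotsc,v_{c_r})$ in the old incidence graph can be patched up to a path in the new one, using that each bisected cell splits into two cells that are themselves adjacent across $h$. Your argument is instead a direct geometric one: pick interior points of the two target cells, connect them by a segment in general position, and read off the path of cells crossed. Both are standard proofs of this fact. Your approach has the advantage of producing an explicit walk and of being dimension-agnostic in a single stroke, without the case analysis of the inductive step; the paper's induction avoids the general-position perturbation argument, which you only sketch, and is slightly more self-contained in that respect. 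One small remark: the phrase ``meets some single hyperplane tangentially'' is not quite apt here, since an affine line meets an affine hyperplane either transversally in one point, not at all, or lies inside it; the last case is already excluded because your endpoints are in open cells and hence off every hyperplane, so that clause can simply be dropped.
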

\begin{proof}
    We show the claim by induction on the number $i$ of hyperplanes that form the arrangement $A(H)$.
    If $i=1$, the claim is trivial.
    Let $i\geq1$ and $v_{c_1}$ and $v_{c_r}$ be two nodes in $G(A(H))$ connected by a path $P=(v_{c_1},\dotsc,v_{c_r})$ with $r\geq2$.
    We add a new hyperplane $h$ to $A(H)$.
    Let $A_i$ and $A_{i+1}$ be the corresponding arrangement before and after $h$ is added, respectively.
    For each cell $c_j$ in $A_i$, we denote the corresponding cell in $A_{i+1}$ by $\bar{c}_j$.
    If $c_j\neq\bar{c}_j$, we denote the two connected cells that bisect $\bar{c}_j$ by $c_j'$ and $c_j''$, respectively.
    
    We show that there exists a $(v_{\bar{c}_1},v_{\bar{c}_2})$-path $P'$ in $G(A_{i+1})$.
    First, let $c_1=\bar{c}_1$.
    If $c_2=\bar{c}_2$, we can choose $P'=(v_{c_1},v_{c_2})$.
    If $c_2\neq\bar{c}_2$, the connectivity of $c_2'$ and $c_2''$ ensures that we can choose $P'\in\Set{(v_{c_1},v_{c_2'}), (v_{c_1},v_{c_2''})}$.

    Second, let $c_1\neq\bar{c}_1$.
    As $c_1$ and $c_2$ are connected in $G(A_i)$, it follows that $c_1'$ or $c_1''$ is connected to $\bar{c}_2$ in $G(A_{i+1})$.
    In the first case, we can choose $P'~\in~\Set{(v_{c_1'},v_{c_2}), (v_{c_1'},v_{c_2'}), (v_{c_1'},v_{c_2''})}$ depending on whether $c_2$ is divided by $h$ or not.
    In the second case, we can replace $c_1'$ by $c_1''$ and obtain the sought path.
    Inductively, there exists a $(v_{\bar{c}_1},v_{\bar{c}_r})$-path $P'$ in $G(A_{i+1})$ and the claim follows.
\end{proof}
The graph connectivity ensures that, starting from any node $v_{c'}$, all nodes $v_c$ in $G(A(H^=))$ can be reached using a breadth-first search.
The choice of the incidence graph $G(A(H^=))$ prevents us from reaching nodes that do not belong to a cell of $A(H^=)$.
The incidence graph $G(A(H^=))$ can be easily derived from the incidence lattice $G'(A(H^=))$ constructed in the algorithm of \cite{edelsbrunner1986constructing} to compute the arrangement $A(H^=)$.
If two cells $c_1$ and $c_2$ share a common face $f$ of dimension $p-1$, there exists a node $v'_f$ in $G'(A(H^=))$ and an edge from $v'_f$ to each of the nodes $v'_{c_1}$ and $v'_{c_2}$ belonging to the cells $c_1$ and $c_2$.
In $G(A(H^=))$, there is an edge between $v_{c_1}$ and $v_{c_2}$ and we store the information to which separating hyperplane $h(e,f)$ this edge belongs.
This requires constant time per face of dimension $p-1$ and, thus, the graph $G(A(H^=))$ can be obtained in time linear in the number of faces of dimension $p-1$ of $A(H^=)$ from $G'(A(H^=))$.
Of course, we store the representative point from the relative interior of each cell.
We are now ready to state our improved algorithm.

\begin{algorithm}[t]
    \LinesNumbered
    \Input{A matroid \M{} with $p$-parametric weights $w(e,\lambda)$ and parameter interval~$I \subseteq \mathbb{R}^p$.}
    \Output{A solution of the $p$-parametric matroid problem.}
    
    Compute the set $H^==\Set{h(e,f)\colon e,f\in E}$\;
    Compute $A(H^=)$ with the algorithm of \cite{edelsbrunner1986constructing}\;
    Obtain the incidence graph $G(A(H^=))$\;
    Choose a vertex $v_{c'}$ in $G(A(H^=))$ of some arbitrary cell $c'$\;
    Use the greedy algorithm to compute $B_\lambda^*$ at the point $\lambda\in c'$ from the relative interior of $c'$\;
    Use breadth-first search to explore the incidence graph $G(A(H^=))$\;
    \For{each vertex $v_c$ reached with the breadth-first search}{
        Perform an independence test to compute $B_\lambda^*$ for $\lambda\in c$\;
    }
    \Return All cells $c\in A(H^=)$ with corresponding minimum weight basis $B_\lambda^*$\;
    \caption{An improved algorithm for the $p$-parametric matroid problem.}
    \label{alg:p-parametric_matroid_impr_p_1}
\end{algorithm}

\begin{theorem}\label{theo:correct_param_matroid_algo_p_1}
    Let $p\geq2$.
    \cref{alg:p-parametric_matroid_impr_p_1} solves the $p$-parametric matroid problem in time $\O{m^{2p}f(m)}$.
\end{theorem}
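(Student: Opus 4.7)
The plan is to establish correctness by combining the three ingredients already developed in this section, and then to bound the running time line by line.

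For correctness, I would first invoke \cref{obs:cells_suffice_p_1} to reduce the task to computing an optimal basis at one interior point per cell of $A(H^=)$. The initial greedy call handles the seed cell $c'$, and \cref{lem:connectivity_GAH_p_1} ensures that the BFS in line~6 reaches every remaining cell. The delicate part is to argue that the single independence test in line~8 is enough to propagate a correct basis from a cell $c_1$ to an adjacent cell $c_2$. Here I would use \cref{ass:p_1}(a): since no two hyperplanes in $H^=$ coincide, the shared $(p-1)$-face of $c_1$ and $c_2$ lies on a unique separating hyperplane $h(e,f)\in H^=$, and the weight orderings in $c_1$ and $c_2$ agree on every pair other than $\{e,f\}$. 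A standard matroid exchange argument then shows that the minimum weight basis changes by at most one swap: either $e\in B^*_{\lambda_{c_1}}$, $f\notin B^*_{\lambda_{c_1}}$, and the order flips in the direction that makes $f$ strictly cheaper in $c_2$, in which case a single independence test on $B^*_{\lambda_{c_1}}-e+f$ decides whether to perform the swap; or none of these conditions applies and $B^*_{\lambda_{c_1}}$ remains optimal in $c_2$. In either case the label stored on the incidence edge identifies the candidate pair $(e,f)$, so the pivot costs $\O{f(m)}$.

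For the running time, I would sum the cost of each line. Forming $H^=$ takes $\O{m^2}$, since there are $\binom{m}{2}$ hyperplanes and each is built in constant time. By \cref{theo:compute_arrangement_p_1}, $A(H^=)$ together with one interior representative per cell is produced in $\O{(m^2)^p}=\O{m^{2p}}$ time, which uses the hypothesis $p\geq 2$. Extracting $G(A(H^=))$ from the incidence lattice is linear in the number of $(p-1)$-faces of $A(H^=)$, which by \cref{theo:number_of_faces_p_1} is $\O{m^{2p}}$; labelling each edge with its hyperplane $h(e,f)$ costs constant work per face. The single initial greedy call runs in $\O{m\log m+mf(m)}$. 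Finally, the BFS visits each of the $\O{m^{2p}}$ cells (\cref{theo:number_of_cells_p_1}) and traverses $\O{m^{2p}}$ incidence edges, performing one independence test per cell for $\O{f(m)}$ additional work. Adding these contributions yields the claimed $\O{m^{2p}f(m)}$ bound, which dominates all lower-order terms.

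The main obstacle is the pivot argument: one must tie the uniqueness of the separating hyperplane across a facet to a precise matroid-theoretic claim that a single independence test suffices to update the basis when crossing into an adjacent cell, and that the hyperplane label on the incidence edge pinpoints which test to perform. Once this is in place, the BFS traversal, the arrangement computation, and the summation of the running time contributions all follow routinely from the tools imported from \cref{sec:prelims_p_1}.
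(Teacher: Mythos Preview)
Your proposal is correct and follows essentially the same approach as the paper's own proof: the correctness argument hinges on the uniqueness of the separating hyperplane across a shared $(p-1)$-face (so only the $\{e,f\}$ ordering flips), the single independence test on $B^*_{\lambda_{c_1}}-e+f$, and the connectivity of $G(A(H^=))$ from \cref{lem:connectivity_GAH_p_1}; the running-time analysis matches the paper line by line, including the $\O{m^{2p}}$ bound on $(p-1)$-faces via \cref{theo:number_of_faces_p_1}. Your explicit invocation of \cref{ass:p_1}(a) and of \cref{obs:cells_suffice_p_1}, together with the pivot case analysis, are slightly more detailed than the paper's version but change nothing substantive.
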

\begin{proof}
    We first prove the correctness of the algorithm.
    An edge $(v_{c_1},v_{c_2})$ in $G(A(H^=))$ for two neighboring cells $c_1$ and $c_2$ in $A(H^=)$ belongs to a separating hyperplane $h(e,f)\in H^=$.
    We assume without loss of generality that $c_1\subseteq h^<(e,f)$.
    The sorting of all pairs of weights of elements not equal to $(e,f)$ is the same in $c_1$ as in $c_2$.
    In $c_1$, the weight of $e$ is smaller than that of $f$ and in $c_2$, $f$ has smaller weight than $e$.
    So, if a minimum weight basis $B_\lambda^*$ is known for a point from the relative interior of the cell $c_1$, the minimum weight basis of the cell $c_2$ can be determined by a simple independence test of $B_\lambda^*-e+f$.
    Since the graph $G(A(H^=))$ is connected according to \cref{lem:connectivity_GAH_p_1}, we reach every node $v_c$ and, thus, every cell $c\in A(H^=)$ with the breadth-first search on $G(A(H^=))$.
    
    We now analyze the asymptotic running time.
    As in \cref{alg:p-parametric_matroid_p_1}, the arrangement $A(H^=)$ can be computed in $\O{m^{2p}}$ time.
    The incidence graph $G(A(H^=))$ can be obtained in time linear in the number of faces of dimension $p-1$ of $A(H^=)$ from the incidence lattice $G'(A(H^=))$.
    According to \cref{theo:number_of_faces_p_1}, this number is
    $$\binom{m^2}{1}\cdot\sum_{i=0}^{p-1}\binom{m^2-p-1+p-1}{i}\in\O{m^2 \cdot (m^2)^{p-1}} =\O{m^{2p}}.$$
    The greedy algorithm requires $\O{m\log m+mf(m)}$ time.
    The graph $G(A(H^=))$ has at most $\O{m^{2p}}$ many nodes due to \cref{theo:number_of_cells_p_1}.
    Each edge in $G(A(H^=))$ belongs to a face of dimension $p-1$ in $A(H^=)$, of which there are at most $\O{m^{2p}}$ many.
    Consequently, breadth-first search runs in $\O{m^{2p}}$ time and the computation of the minimum weight basis of a new cell can be performed in $\O{f(m)}$ time with a single independence test.
    Overall, we obtain a total running time of $$\O{m^{2p} + m^{2p}+ m(f(m)+\log m) + m^{2p}f(m)} = \O{m^{2p}f(m)}.$$ 
\end{proof}
The algorithm returns the cells of the arrangement and a unique minimum weight basis for each cell.
If one is interested in a solution set of minimal cardinality, neighboring cells having the same minimum weight basis can be merged, for more information we refer the reader to \cref{alg:weight_set}.

For the multi-parametric minimum spanning tree problem, \cref{alg:p-parametric_matroid_impr_p_1} runs in polynomial time as an independence test, i.~e.\ a cycle test, can be performed in $\O{\log n}$ time, cf.\ \cite{sleator1981data}.
\begin{corollary}
    Let $p\geq2$.
    The $p$-parametric minimum spanning tree problem can be solved in time $\O{m^{2p}\log n}$.
\end{corollary}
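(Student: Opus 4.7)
The plan is to invoke \cref{theo:correct_param_matroid_algo_p_1} for the special case in which the matroid $\M$ is the graphic matroid of $G$, so that bases correspond to spanning trees and the $p$-parametric matroid problem coincides with the $p$-parametric minimum spanning tree problem. The theorem already gives a running time of $\O{m^{2p} f(m)}$, so the only quantity that needs to be pinned down is $f(m)$, the time for a single independence test in the graphic matroid.

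For a graphic matroid, independence of an edge set is equivalent to acyclicity, so the independence tests performed in \cref{alg:p-parametric_matroid_impr_p_1}, which always have the form "is $B_\lambda^* - e + f$ independent?", reduce to the question whether the endpoints of $f$ lie in different components of the forest $B_\lambda^* - e$. Using the link-cut trees of \cite{sleator1981data} to store the current spanning tree, such a connectivity query can be answered in $\O{\log n}$ amortized time, and the transition from one cell of $A(H^=)$ to a neighboring cell along the breadth-first search requires at most one cut and one link, both of which are also $\O{\log n}$ operations. Hence $f(m) \in \O{\log n}$ throughout the execution of the algorithm.

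Substituting this bound into the running time from \cref{theo:correct_param_matroid_algo_p_1} immediately yields $\O{m^{2p} \log n}$. The only point worth verifying carefully, which I would regard as the main (minor) obstacle, is that the dynamic tree data structure is updated \emph{incrementally} when the BFS proceeds from one cell to its neighbor, rather than being rebuilt from scratch; this is immediate because neighboring cells in $G(A(H^=))$ correspond to minimum weight bases whose symmetric difference has cardinality at most two, which is realized by exactly one cut-and-link step on the link-cut tree. No further argument beyond this substitution is required.
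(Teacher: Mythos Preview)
Your proof is correct and follows exactly the paper's approach: the paper simply observes that an independence test in a graphic matroid is a cycle test, which the dynamic trees of \cite{sleator1981data} support in $\O{\log n}$ time, and then substitutes $f(m)\in\O{\log n}$ into \cref{theo:correct_param_matroid_algo_p_1}. One small remark on your final paragraph: under a literal breadth-first search the cells dequeued consecutively need not be adjacent, so a single link-cut tree cannot always be passed from one dequeued cell to the next by just one cut and one link; replacing BFS by DFS (so that every forward and backtracking step is a single swap or its reversal) fixes this without affecting the bound, and the paper itself does not spell out this implementation detail either.
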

We conclude this section with an illustration of the relationship between optimal bases of neighboring cells from \cref{alg:p-parametric_matroid_impr_p_1} for the example of the 2-parametric graphic matroid from \cref{fig:ex_graph_parametric_p_1}.

\begin{figure}
\centering
\begin{tikzpicture}
    \begin{axis}[
        width=1.1\textwidth,
        height=.7\textheight,
        xmin=-3.5,xmax=3.5,
        ymin=-3,ymax=3,
        grid=both,
        axis line style ={line width=.5pt,draw=gray!40},
        major grid style={line width=.2pt,draw=gray!10},
        axis line style={shorten >=-9.5pt, shorten <=-9.5pt},
        axis lines=middle,
        axis line style={latex-latex},
        enlargelimits={abs=0.4},
        ticklabel style={font=\tiny,fill=white}
    ]
    \addplot[fill=black!100,draw opacity=0,fill opacity=0.19] 
    coordinates{
        (-3.1,-0.4)
        (-0.15,-0.4)
        (3.1, 2.85)
        (3.1, 3.1)
        (-3.1,3.1)
        };
    \addplot[fill=black!100,draw opacity=0,fill opacity=0.03] 
    coordinates{
        (3.1,-0.4)
        (-0.15,-0.4)
        (3.1, 2.85)
        };
    \addplot[fill=black!100,draw opacity=0,fill opacity=0.13] 
    coordinates{
        (3.1,-0.4)
        (-0.15,-0.4)
        (-2.85,-3.1)
        (3.1,-3.1)
        };
    \addplot[fill=black!100,draw opacity=0,fill opacity=0.07] 
    coordinates{
        (-3.1,-0.4)
        (-3.1, -3.1)
        (-2.85,-3.1)
        (-0.15,-0.4)
        };
    \addplot [line width = 1, smooth, domain=-2.1:3.1] {1-1*x} node[pos=0, above] {$h(e,f)$};
    \addplot [line width = 1, smooth, domain=-2.85:3.1] {-0.25+1*x} node[right] {$h(e,g)$};
    \addplot [line width = 1, smooth, domain=-3.1:3.1] {-0.75+0.25*x} node[pos=0,left] {$h(e,h)$};
    \addplot [line width = 1, smooth, domain=-3.1:3.1] {0.1666+0.3333*x} node[pos=1.07,right, below] {$h(f,g)$};
    \addplot [line width = 1, smooth, domain=-3.1:3.1] {-0.4+0.0*x} node[pos=1.07,right,below] {$h(f,h)$};
    \addplot [line width = 1, smooth, domain=-3.1:3.1] {-1.25-0.5*x} node[pos=0,left] {$h(g,h)$};

    \node[circle,fill,inner sep=1.4] at (-0.6,-0.6) {};
    \node at (-0.85,-0.65) {\footnotesize$T^*$};
    \node at (-1.8,-0.7) {\footnotesize$T^*$};
    \node at (-1.9,-1.7) {\footnotesize$T^*$};
    \node at (-2.5,-0.55) {\footnotesize$T^*$};
    \node at (0.3,-0.5) {\footnotesize$T^*-e+g$};
    \node at (-0.6,-1.6) {\footnotesize$T^*-e+g$};
    \node at (1,-1.2) {\footnotesize$T^*-e+g$};
    \node at (2.45,-0.75) {\footnotesize$T^*-e+g$};
    \node at (0.65,-0.06) {\footnotesize$T^*$};
    \node at (0.61,-0.31) {\footnotesize$-h+f-e+g$};
    \node at (1.8,0.3) {\footnotesize$T^*-h+f-e+g$};
    \node at (2.3,1.5) {\footnotesize$T^*$};
    \node at (2.25,1.25) {\footnotesize$-h+f-e+g$};
    \node at (2.77,-0.17) {\footnotesize$T^*$};
    \node at (2.45,-0.32) {\footnotesize$-h+f-e+g$};
    \node at (0.6,1.7) {\footnotesize$T^*-h+f$};
    \node at (-1.8,1.2) {\footnotesize$T^*-h+f$};
    \node at (-2.6,-0.1) {\footnotesize$T^*$};
    \node at (-2.6,-0.3) {\footnotesize$-h+f$};
    \node at (-0.6,-0.3) {\footnotesize$T^*-h+f$};

    \node at (3.7,0.15) {$\lambda_1$};
    \node at (-0.2,3.31) {$\lambda_2$};
    
    \end{axis}
\end{tikzpicture}
\caption{The arrangement of the separating hyperplanes $A(H^=)$ obtained by intersecting the 2-parametric weights of the edges from the graph in \cref{fig:ex_graph_parametric_p_1}.
Each cell is assigned its unique minimum spanning tree.
The minimum spanning trees of two neighboring cells differ by a maximum of two edges.
If two cells have the same minimum spanning tree, they are colored in the same shade of gray.}
\label{fig:neighborhood_p_1}
\end{figure}
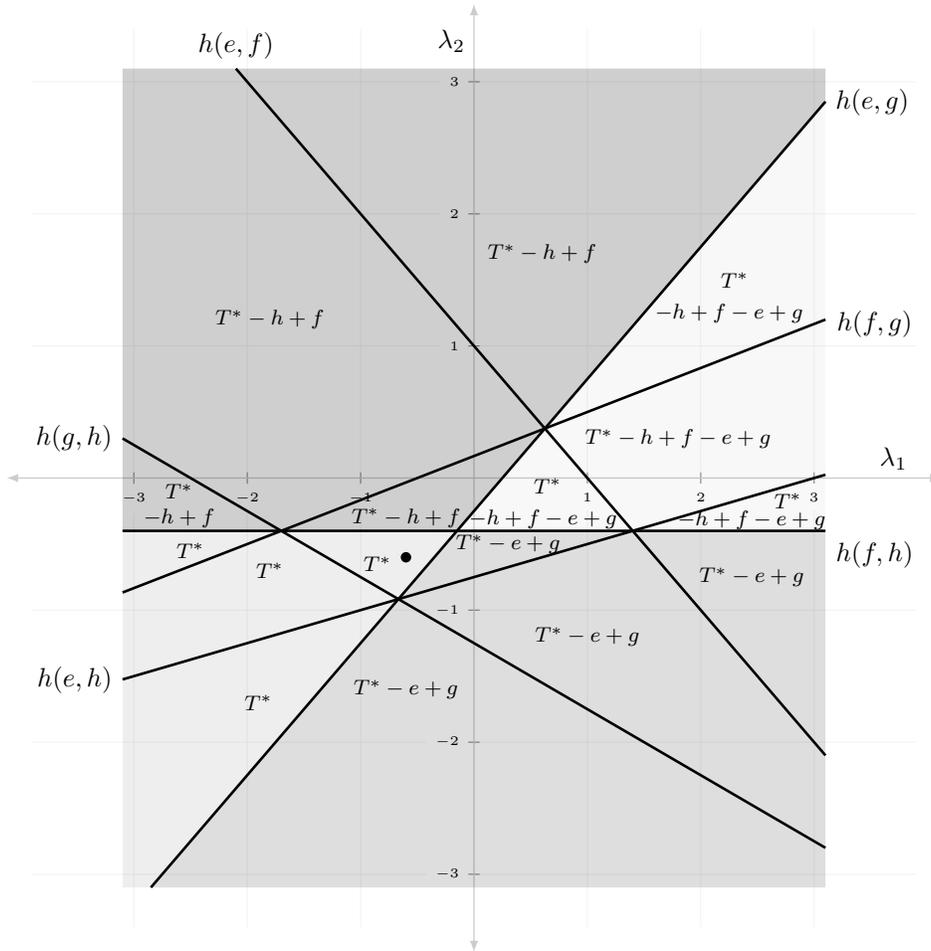

\begin{example}\label{ex:neighborhood_bases_p_1}
    \cref{fig:neighborhood_p_1} shows the arrangement of hyperplanes $A(H^=)$, which is obtained by intersecting the weight functions of the edges of the graph from \cref{fig:ex_graph_parametric_p_1}.
    With \cref{alg:p-parametric_matroid_impr_p_1}, we can compute a solution of the 2-parametric minimum spanning tree problem and the corresponding optimal value function $w$, see \cref{fig:opt_value_fct_p_1}.
    First, the minimum spanning tree $T^*=\Set{e,h}$ is computed for the cell containing the black point.
    Starting from this cell, the breadth-first search is used to explore the incidence graph of the arrangement.
    For each new cell, the minimum spanning tree can be computed by a cycle test of the minimum spanning tree of the neighboring cell with a swap of the edges that form the corresponding separating hyperplane.
    In \cref{fig:neighborhood_p_1}, the minimum spanning trees of two neighboring cells have a symmetric difference of at most two, and if the symmetric difference is zero, the cells are subsumed in the same gray scale.
    The four resulting merged cells define the 2-dimensional polyhedra in the graph of the optimal value function $w$ from \cref{fig:opt_value_fct_p_1}.
    More precisely, the projection of the graph of $w$ onto the $\lambda_1$-$\lambda_2$-plane equals the subdivision of the parameter interval into these four merged cells.    
\end{example}

\section{Application 1: Weight set decomposition for multi-objective matroids}\label{sec:wsd_p_1}
In this section, we apply our \cref{alg:p-parametric_matroid_impr_p_1} to the multi-objective version of the matroid problem (\cite{ehrgott1996matroids}) and its special variant - the well-known minium spanning tree problem, \cite{hamacher1994spanning,ruzika2009survey}.
For the latter, we obtain a polynomial time algorithm for the computation of all extreme supported spanning trees and the corresponding weight set decomposition.
The breadth-first search technique on the cells of the arrangement of the separating hyperplanes in combination with the adjacency of optimal bases improves upon the algorithm of \cite{seipp2013adjacency} by a factor of $\O{m\frac{\alpha(m)}{\log n}}$.
Here $\alpha$ is a functional inverse of Ackermann's function.
Our algorithm is faster than other weight set decomposition methods that allow for an easy and explicit computation of their running time.

The concept of adjacency has been used before in the multi-objective (graphic) matroid problem.
For arbitrary matroids, \cite{ehrgott1996matroids} has established a characterization of the efficient bases via a basis exchange property.
Also for graphic matroids, adjacency is a widely used concept in the multicriteria minimum spanning tree problem, cf.\ \cite{hamacher1994spanning,ehrgott1997connectedness,ruzika2009survey,gorski2011connectedness}.
For the bi-objective case, \cite{aggarwal1982minimal} and \cite{andersen1996bicriterion} used adjacent trees for the exact or approximate computation of the efficient spanning trees, respectively.
Also \cite{correia2021finding} used adjacency for the weight set decomposition in the general multi-objective minimum spanning tree problem.

We start this section with an introduction of the required notation from multi-objective optimization, cf.\ \cite{ehrgott2005multicriteria}.
For two vectors $y,y'\in\R^p$, we define the following ordering relations
\begin{enumerate}
    \item[] $y\leqq y'$ if and only if $y_i \leq y_i'$ for all $i=1,\dotsc,p$,
    \item[] $y\leq y'$ if and only if $y\leqq y'$ and $y\neq y'$,
    \item[] $y< y'$ if and only if $y_i < y_i'$ for all $i=1,\dotsc,p$.
\end{enumerate}
The set of all non-negative vectors in $\R^p$ is defined as $\R_\geqq^p\coloneqq\Set{\lambda\in\R^p\colon\lambda\geqq0}$ and $\R_>^p\coloneqq \Set{\lambda\in\R^p\colon\lambda>0}$ summarizes all strictly positive vectors in $\R^p$.

Each edge $e\in E$ is associated with a \emph{cost vector} $c_e\coloneqq(c_{e,1},\dotsc,c_{e,p})\in \R^p$, where $p\geq2$ is again a fixed natural number.
The \emph{cost of a tree} $T\in\T$ is then defined as
$$c(T)\coloneqq(c_1(T),\dotsc,c_p(T))^\top\coloneqq\big(\sum_{e\in T}c_{e,1},\dotsc,\sum_{e\in B}c_{e,p}\big)^\top$$
and the multi-objective minimum spanning tree problem reads as follows.
\begin{problem}[Multi-objective minimum spanning tree problem]\label{prob:multiobjective_mst}
    The problem
    \begin{mini*}[2]
	{}{c(T)}{}{}
	\addConstraint{T\in\mathcal{T}}{}{}
    \end{mini*}
    is called multi-objective minimum spanning tree problem, where $\min$ means that each of the $p$ objectives is to be minimized.
\end{problem}
Each spanning tree $T\in\T$ is mapped to its image $c(T)\in\R^p$ and these images define the image set $Y\coloneqq\Set{c(T)\colon T\in\mathcal{T}}$.
Since there is generally no spanning tree that minimizes all objectives simultaneously, we are interested in so-called efficient trees.
A spanning tree $T\in\T$ is \emph{efficient} if there is no other spanning tree
$T'\in\T$ with $c(T')\leq c(T)$.
In this case, the corresponding image $c(T)$ is called \emph{non-dominated point}.
We now establish the relationship to the multi-parametric minimum spanning tree problem.
\begin{definition}[Weighted sum scalarization (\cite{ehrgott2005multicriteria})]\label{prob:weighted_sum_matroid}
    Let $\lambda\in\R_\geqq^p$.
    The single-objective problem
    \begin{mini*}[2]
	{}{\lambda^\top c(T)=\sum_{i=1}^p\lambda_i c_i(T)}{}{}
	\addConstraint{T\in\T}{}{}
    \end{mini*}
    is called the \emph{weighted sum problem} with respect to $\lambda$.
\end{definition}
An efficient tree which can be computed with the weighted sum problem for some $\lambda\in\R_\geqq^p$ is called \emph{supported efficient}.
The corresponding image is called supported non-dominated point.
A supported non-dominated point which is also an extreme point of the convex hull of the image set $Y$ is called \emph{extreme supported non-dominated}.
The set of all extreme supported non-dominated points is denoted by $Y_{exN}$.
It is well known that there is a one-to-one correspondence between extreme supported non-dominated points and the optimal solutions of the weighted sum problem with strictly positive weights.
\begin{theorem}[\cite{seipp2013adjacency}]\label{theo:extreme_points_ws_p_1}
    The image $y=c(T)$ of a tree $T\in\T$ is an extreme supported non-dominated point if and only if there exists a strictly positive weighting vector $\lambda\in\R_>^p$ such that $y$ is the unique optimal solution of the weighted sum problem $\min_{y'\in Y} \lambda^\top y'$.
\end{theorem}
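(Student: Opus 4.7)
The plan is to prove both directions separately, using the polyhedral structure of the image set.

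For the sufficiency ($\Leftarrow$), suppose $\lambda\in\R_>^p$ makes $y=c(T)$ the unique minimizer of $\lambda^\top y'$ over $Y$. I would first check that $y$ is non-dominated: any $y'\in Y$ with $y'\leq y$ and $y'\neq y$ would satisfy $\lambda^\top y'<\lambda^\top y$ because $\lambda>0$, contradicting uniqueness. Since $\lambda\in\R_\geqq^p$, the point $y$ is by definition supported. Finally, unique optimality of a linear functional over the polytope $\mathrm{conv}(Y)$ characterizes a vertex, so $y$ is also an extreme point of $\mathrm{conv}(Y)$.

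For the necessity ($\Rightarrow$), the strategy is to pass to the upper image polyhedron $P := \mathrm{conv}(Y)+\R_\geqq^p$, whose recession cone equals $\R_\geqq^p$. The first key step is to identify every extreme supported non-dominated point of $Y$ as a vertex of $P$: supportedness yields a non-negative $\lambda_0$ under which $y$ minimizes $\lambda_0^\top$ over $P$, while extremality in $\mathrm{conv}(Y)$ together with non-dominatedness in $Y$ rule out writing $y$ as a proper convex combination of points of $P$, since any such representation would either produce a strict convex combination inside $\mathrm{conv}(Y)$ or yield a point of $\mathrm{conv}(Y)$ that dominates $y$. Once $y$ is identified as a vertex of $P$, its normal cone $N_y(P)$ is a full-dimensional polyhedral cone in $\R^p$, and because the recession cone of $P$ equals $\R_\geqq^p$, we have $N_y(P)\subseteq(\R_\geqq^p)^*=\R_\geqq^p$. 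Since $N_y(P)$ is closed with nonempty topological interior contained in $\R_\geqq^p$, this interior must actually lie inside $\mathrm{int}(\R_\geqq^p)=\R_>^p$. Picking any $\lambda\in\mathrm{int}\,N_y(P)$ therefore produces a strictly positive weight vector for which $y$ is the unique minimizer of $\lambda^\top$ over $P$, and hence also over $Y\subseteq P$.

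The main obstacle is precisely this promotion from non-negativity to strict positivity. Supportedness alone only supplies a witness $\lambda_0\in\R_\geqq^p$ whose zero entries may permit other points of $Y$ to tie with $y$ at the minimum. The decisive leverage is that $y$ is simultaneously an extreme point of $\mathrm{conv}(Y)$, which turns $y$ into a genuine vertex of $P$ and forces its normal cone to be full-dimensional; without extremality this cone could collapse into a proper face of $\R_\geqq^p$ and fail to meet $\R_>^p$ at all.
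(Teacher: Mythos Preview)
The paper does not give its own proof of this theorem; it is quoted from \cite{seipp2013adjacency} and used as a black box. There is consequently nothing in the paper to compare your argument against.

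Your outline is the standard polyhedral route and is essentially sound, but one step is underjustified as written. In the necessity direction you claim that ``extremality in $\mathrm{conv}(Y)$ together with non-dominatedness in $Y$'' forces $y$ to be a vertex of $P=\mathrm{conv}(Y)+\R_\geqq^p$, because otherwise some $q\in\mathrm{conv}(Y)$ with $q\leq y$ would arise. But non-dominatedness in the \emph{finite} set $Y$ does not by itself exclude such a $q$: for $Y=\{(0,0,2),(0,2,0),(2,0,0),(1,1,1)\}$ the point $y=(1,1,1)$ is a vertex of $\mathrm{conv}(Y)$ and non-dominated in $Y$, yet $q=(2/3,2/3,2/3)\in\mathrm{conv}(Y)$ satisfies $q<y$. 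What actually closes this step is \emph{supportedness}: under the standard definition used in the references the paper cites (optimality of $y$ for some $\lambda_0\in\R_>^p$), any $q\in\mathrm{conv}(Y)$ with $q\leq y$ and $q\neq y$ would give $\lambda_0^\top q<\lambda_0^\top y$, contradicting optimality of $y$ over $\mathrm{conv}(Y)$. With supportedness invoked here (rather than non-dominatedness in $Y$), your argument that $y$ is a vertex of $P$ goes through, and the remainder---full-dimensional normal cone contained in $\R_\geqq^p$, hence with interior in $\R_>^p$, yielding a strictly positive $\lambda$ with unique optimum---is correct.
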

A solution of the multi-parametric minimum spanning tree problem with parameter interval $I=\R_>^p$ thus provides all extreme supported non-dominated points.
Positive scalar multiples of weight vectors lead to same solutions of the weighted sum problem and one is therefore interested in the set of normalized weight vectors.
\begin{definition}[Weight set (\cite{przybylski2010recursive})]
    The set
    $$\Lambda\coloneqq\Set{\lambda\in\R_>^p\colon \sum_{i=1}^p\lambda_i=1}$$
    of all normalized strictly positive weight vectors is called \emph{weight set}.
\end{definition}
The dimension of $\Lambda$ is $p-1$ as the last component $\lambda_p=1-\sum_{i=1}^{p-1}\lambda_i$ reduces the degree of freedom by one.
This yields a projection of the weight space in $\R^p$ onto $\R^{p-1}$.
Even if positive scalar multiples of weights are no longer taken into account, different weights may still lead to the same spanning tree.
One is therefore interested in the so-called \emph{weight set decomposition}, which divides $\Lambda$ into polytopes with the following property.
All inner points of a polytope lead to the same unique solution of a weighted sum problem, where the associated objective function vector $y\in Y$ summarizes all these weights in its weight set component $\Lambda(y)$.
It is known from the literature that the extreme supported non-dominated points are sufficient to determine all weight set components of $\Lambda$, see \cite{benson2000outcome}.
Thus, the \emph{weight set component} of a point $y\in Y$ can be written as 
$$\Lambda(y)=\Set{\lambda\in \Lambda\colon \lambda^\top y\leq\lambda^\top y'\; \text{for all}\;y'\in Y_{exN}}\; \text{and}\;\; \Lambda=\bigcup_{y\in Y_{exN}}\Lambda(y)$$
then yields the weight set decomposition.
We refer the reader to \cite{seipp2013adjacency} for more details on the weight set decomposition of the multi-objective minimum spanning tree problem.
The author has developed a polynomial time algorithm to compute the weight set decomposition, which we extend by the neighborhood search from \cref{alg:p-parametric_matroid_impr_p_1} in the following.
To do this, we need to investigate a few aspects in more detail.

As shown in \cite{seipp2013adjacency}, it is sufficient to consider the cells of the arrangement $A(H^=)$ that have a non-empty intersection with the affine hull of the weight set $\Lambda$.
These are obtained by intersecting all separating hyperplanes from the subset  
$$\Bar{H}\coloneqq\Set{h(e,f)\in H^=\colon c_e\nleqq c_f,\; c_e\ngeqq c_f}.$$
Intuitively, if $c_e\leqq c_f$, then the edge $e$ is not more expensive than $f$ for each relevant vector $\lambda\in \Lambda$, which means that the separating hyperplane $h(e,f)$ does not have to be taken into account.
The projection of the intersection of the hyperplanes $h(e,f)\in \Bar{H}$ with the affine hull of $\Lambda$ onto the first $p-1$ components thus yields an arrangement of at most $\binom{m}{2}$ hyperplanes in $\R^{p-1}$.
In contrast to the parametric (graphic) matroid problem, where the case $\I=\R^p$ is possible, the number of cells and, thus, the running time of the algorithm is reduced by a factor of $m^2$.

In the algorithm of \cite{edelsbrunner1986constructing}, it may happen that the representative inner point of a cell that lies partially outside the weight set $\Lambda$ is not contained in $\Lambda$.
To ensure that a representative is always found within the set $\Lambda$, the $p$ hyperplanes $b_i\coloneqq\Set{\lambda\in\R^p\colon\lambda_i=0}$ for $i=1,\dotsc,p$, which define the boundary of $\Lambda$, are added to the arrangement.
This does not increase the asymptotic number of hyperplanes and, thus, of cells of the arrangement.

Note, that the corresponding incidence graph $G(A(\Bar{H}))$ is closely related to the dual graph of the weight set decomposition, see \cite{zimmer1997fast,przybylski2010recursive}.
The only difference is that the dual graph subsumes polytopes with the same unique minimum spanning tree.
After executing our algorithm for the computation of the weight set decomposition, these polytopes are merged and, then, the two graphs coincide.
Again, the incidence graph $G(A(\Bar{H}))$ can be computed from the incidence lattice $G'(A(\Bar{H}))$ from the algorithm of \cite{edelsbrunner1986constructing} in time linear in the number of cells of $A(\Bar{H})$.
Our procedure for the computation of the weight set decomposition is summarized in \cref{alg:weight_set}.

\begin{algorithm}[!tp]
    \LinesNumbered
    \Input{A graph $G=(V,E)$ with edge costs $c_e\in\R^p$.}
    \Output{The set $Y_{exN}$ and the weight set decomposition $W^0=\bigcup_{y\in Y_{exN}}W^0(y)$.}
    
    Compute the relevant subset $\Bar{H}=\Set{h(e,f)\in H^=\colon c_e\nleqq c_f\; c_e\ngeqq c_f}$\;
    Add the hyperplanes $b_i=\Set{\lambda\in\R^p\colon\lambda_i=0}$ for $i=1,\dotsc,p$\; 
    Compute $A(\Bar{H})$ with the algorithm of \cite{edelsbrunner1986constructing}\;
    Obtain the incidence graph $G(A(\Bar{H}))$\;
    Choose a vertex $v_{c'}$ in $G(A(\Bar{H}))$ of some arbitrary cell $c'$\;
    Compute a minimum spanning tree $T^*$ at the point $\lambda\in c'$ from the relative interior of $c'$\;
    Initialize $Y_{exN}\algass\Set{c(T^*)}$\;
    Use breadth-first search to explore $G(A(\Bar{H}))$\;
    \For{each vertex $v_c$ reached with the breadth-first search}{
        Perform a cycle test to compute a minimum spanning tree $T^*$ in $c$\;
        \If{$c(T^*)\notin Y_{exN}$}{
            Update $Y_{exN}\algass Y_{exN}\cup\Set{c(T^*)}$\;
            Store $(W^0(c(T^*)),\lambda_{c(T)})\coloneqq(R,\lambda)$
        }
        \Else{
            Set $z\algass c(T^*)\in Y_{exN}$\;
            Set $(W^0(z),\lambda_z) \algass (W^0(z)\cup R,\lambda_z)$
        }
    }
    \Return $Y_{exN}$ and $W^0(y)$ for all $y\in Y_{exN}$\;
    \caption{An algorithm for the weight set decomposition.}
    \label{alg:weight_set}
\end{algorithm}

\begin{theorem}
    \cref{alg:weight_set} computes the set $Y_{exN}$ of all extreme supported non-dominated points and the corresponding weight set decomposition for the multi-objective minimum spanning tree problem in time $\O{m^{2p-2}\log n}$.
\end{theorem}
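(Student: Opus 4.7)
My approach mirrors the proof of \cref{theo:correct_param_matroid_algo_p_1}, adapted to the restricted arrangement $A(\bar{H})$ projected onto the $(p-1)$-dimensional affine hull of $\Lambda$. First I would argue correctness: by \cref{theo:extreme_points_ws_p_1}, every extreme supported non-dominated point arises as the unique optimum of a weighted sum problem for some $\lambda\in\R_>^p$, i.e.\ some $\lambda$ in the weight set $\Lambda$. As already noted in the text, restricting to $\bar H$ (which drops comparisons $h(e,f)$ with $c_e\leqq c_f$) together with the boundary hyperplanes $b_1,\dots,b_p$ is sufficient: inside each full-dimensional cell $c$ of $A(\bar H)$ that meets $\Lambda$, the sorting of the relevant weights $\lambda^\top c_e$ is fixed, so the greedy algorithm returns one and the same minimum spanning tree $T^*_c$ for every $\lambda$ in the relative interior of $c$. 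Hence enumerating all such cells suffices to enumerate $Y_{exN}$.

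Next I would justify that the breadth-first search visits every cell and that the per-step cycle test is correct. Connectivity of the incidence graph $G(A(\bar H))$ is given by \cref{lem:connectivity_GAH_p_1}. For two cells $c_1,c_2$ adjacent across a face of $h(e,f)\in\bar H$, the sortings of weights differ only in the pair $(e,f)$; thus the minimum spanning tree of $c_2$ is obtained from that of $c_1$ by the single swap $T^*_{c_1}-e+f$ (or $-f+e$), validated by one cycle test, exactly as in \cref{theo:correct_param_matroid_algo_p_1}. Since cells sharing the same spanning tree yield the same image $y=c(T^*)$, the merging step in lines 11--16 produces the weight set components $W^0(y)=\bigcup\{c\cap\Lambda : c(T^*_c)=y\}$, which gives the weight set decomposition $\Lambda=\bigcup_{y\in Y_{exN}}W^0(y)$ by the formula recalled before the algorithm statement.

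For the running time I would exploit the dimension drop from $p$ to $p-1$. The set $\bar H$ contains at most $\binom{m}{2}\in\O{m^2}$ hyperplanes, and adding the $p$ boundary hyperplanes does not change this asymptotic count. After projection onto the affine hull of $\Lambda$, the algorithm of \cite{edelsbrunner1986constructing} computes $A(\bar H)$ in $\O{(m^2)^{p-1}}=\O{m^{2p-2}}$ time (\cref{theo:compute_arrangement_p_1}), and, by the same bookkeeping as in \cref{theo:correct_param_matroid_algo_p_1}, the incidence graph $G(A(\bar H))$ is produced in time linear in the number of $(p-2)$-faces, which by \cref{theo:number_of_faces_p_1} is $\O{m^{2p-2}}$. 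The initial greedy call costs $\O{m\log n)}$ using dynamic trees (\cite{sleator1981data}). The BFS traverses at most $\O{m^{2p-2}}$ nodes (\cref{theo:number_of_cells_p_1} in dimension $p-1$) and the same order of edges, performing one cycle test per node at cost $\O{\log n}$. Summing, $\O{m^{2p-2}+m\log n+m^{2p-2}\log n}=\O{m^{2p-2}\log n}$.

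The main obstacle I foresee is being precise about the dimension reduction argument: one has to verify that restricting to $\bar H$, intersecting with the affine hull of $\Lambda$, and projecting onto the first $p-1$ coordinates really yields an arrangement in $\R^{p-1}$ whose cells are in bijection with the portions of $A(H^=)\cap\Lambda$ relevant to the weighted-sum solutions, so that \cref{theo:number_of_cells_p_1,theo:compute_arrangement_p_1,theo:number_of_faces_p_1} may be invoked with $d=p-1$ rather than $d=p$. Once that is in place, the remainder follows the pattern of \cref{theo:correct_param_matroid_algo_p_1} essentially verbatim.
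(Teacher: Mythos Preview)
Your proposal is correct and follows essentially the same approach as the paper: both argue correctness via \cref{theo:extreme_points_ws_p_1}, the fixed sorting within cells, connectivity of $G(A(\bar H))$ from \cref{lem:connectivity_GAH_p_1}, and the single-swap update across adjacent cells, and both obtain the running time by applying \cref{theo:compute_arrangement_p_1,theo:number_of_cells_p_1,theo:number_of_faces_p_1} in dimension $p-1$ after the projection onto the affine hull of $\Lambda$. The paper additionally remarks that merging neighboring polytopes can be done in $\O{1}$ via linked lists, but this does not change the analysis, and your identified ``main obstacle'' (the dimension reduction) is precisely the point the paper delegates to \cite{seipp2013adjacency}.
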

\begin{proof}
    If we intersect the cells of the arrangement $A(\Bar{H})$ with the weight space $\Lambda$ and project onto the first $p-1$ components, we obtain the polytopes of the weight set decomposition.
    According to \cref{theo:extreme_points_ws_p_1}, each such polytope corresponds to an extreme supported non-dominated image of a spanning tree, which is the optimal solution of the weighted sum problem for all $\lambda$ of the polytope.
    If two neighboring polytopes lead to the same minimum spanning tree, they are merged in line 16 of the algorithm.
    The connectivity of the incidence graph $G(A(\Bar{H}))$ guarantees that the breadth-first search finds all polytopes and thus all extreme supported non-dominated points $y\in Y_{exN}$.
    Correctness thus follows analogously to \cref{theo:correct_param_matroid_algo_p_1}.

    Since the set $\Bar{H}$ provides an arrangement of at most $\binom{m}{2}$ hyperplanes in $\R^{p-1}$, we obtain a running time of $\O{m^{2(p-1)}}$ for the first seven lines in analogy to \cref{theo:correct_param_matroid_algo_p_1}.
    The addition of the $p$ hyperplanes $b_i$ does not increase the asymptotic number of hyperplanes and thus the asymptotic running time.
    The incidence graph $G(A(\Bar{H}))$ has at most $\O{m^{2(p-1)}}$ many nodes and
    $$\binom{m^2}{1}\sum_{i=0}^{p-2}\binom{m^2-(p-1)-1+p-2}{i}\in\O{m^2 \cdot (m^2)^{p-2}} =\O{m^{2(p-1)}}$$ many edges according to \cref{theo:number_of_faces_p_1}.
    Thus, the running time of the breadth-first search and the number of iterations of the for-loop are bounded by $\O{m^{2(p-1)}}$.
    A cycle test can be performed in $\O{\log n}$ time with the dynamic tree data structure of \cite{sleator1981data}.
    We store each polytope of the weight set decomposition via its edges in the incidence graph in a linked list.
    If two neighboring polytopes are merged, the edge corresponding to the common face of dimension $p-2$ is deleted and the two linked lists are merged, which can be done in $\O{1}$.
    In summary, we obtain a total running time of $$\O{m^{2(p-1)}} + \O{m^{2(p-1)}\log n}= \O{m^{2p-2}\log n}.$$
\end{proof}
The algorithm of \cite{seipp2013adjacency} uses Kruskal's algorithm with $\O{m\alpha(m)}$ running time (\cite{chazelle2000minimum}) for the computation of the minimum spanning tree of each polytope in the weight set decomposition.
Hence, his algorithm has a total running time of $\O{m^{2p-1}\alpha(m)}$, which we improve by a factor of $\O{m\frac{\alpha(m)}{\log n}}$.

We note that also \cite{correia2021finding} have used adjacency of trees for the computation of the weight set decomposition.
However, their algorithm lists all supported efficient spanning trees, which are in general exponentially many (\cite{ruzika2009survey}) and, consequently, their algorithm has no polynomial running time.
There are other, more general algorithms for the weight set decomposition that do not specifically exploit the matroid structure, cf.\ \cite{benson2000outcome, przybylski2010recursive, halffmann2020inner}.
For these methods, the explicit computation of the asymptotic running time is often a complex task, especially for $p>3$ objectives.
To summarize, our algorithm asymptotically outperforms the weight set decomposition methods that allow an explicit computation of their running time.
\begin{remark}[Multi-objective matroid problem]
    We note that our algorithm can also be used for the computation of the weight set decomposition in the general multi-objective matroid problem, cf.\ \cite{ehrgott1996matroids}.
    
    Each element $e\in E$ of the matroid's ground set is associated with a cost vector $c_e\coloneqq(c_{e,1},\dotsc,c_{e,p})\in \R^p$ and the cost of a basis $B$ of $\M$ is defined as
    $$c(B)\coloneqq\big(\sum_{e\in B}c_{e,1},\dotsc,\sum_{e\in B}c_{e,p}\big).$$
    Similar to \cref{prob:multiobjective_mst}, the goal is to minimize the vector-valued cost $c(B)$ over the set of all bases $B$ of $\M$.
    The notion of (extreme supported) non-dominated points and the concept of weight set decomposition can be formulated analogously as in the case of graphic matroids.
    The adaptation of \cref{alg:weight_set} to arbitrary matroids provides all extreme supported non-dominated points and the weight set decomposition in $\O{m^{2p-2}f(m)}$ time.
\end{remark}

\section{Application 2: Multi-parametric matroid one-interdiction}\label{sec:interdiciton_p_1}
In this section, we apply our algorithm for the $p$-parametric matroid problem to solve the corresponding one-interdiction version.
In this so-called $p$-parametric matroid one-interdiction problem, 
each element of the matroid's ground set is again associated with a weight that depends linearly on $p$ parameters from a given parameter interval $\I\subseteq\R^p$.
Here, we aim at computing, for each parameter vector $\lambda\in\I$, a so-called most vital element and the corresponding objective function value, i.~e.\ the weight of an optimal one-interdicted minimum weight basis.
A most vital element at $\lambda\in\I$ is an element that maximally increases the weight of a minimum weight basis when removed from the matroid's ground set.
A solution to the problem is given by a decomposition of the parameter interval $\I$ into subsets, such that each subset has a unique element that is the most vital element for all parameter vectors within the subset.

In the literature, two parametric matroid interdiction problems have recently been introduced. 
In \cite{hausbrandt2024parametric}, the authors considered the one-parametric version of the problem and in \cite{hausbrandt2024theparametric}, we enhanced this theory by allowing an arbitrary but fixed number of elements to be interdicted.
We refer the reader to these two articles for a more detailed list of relevant references.

This section can be seen as an extension of the work in \cite{hausbrandt2024parametric} with the main difference that the weights of the elements do not only depend on one but on an arbitrary but fixed number $p$ of parameters.
We show that the solution of this problem is of polynomial size, i.~e.\ it consists of polynomially many subsets decomposing $\I$ with associated unique most vital element.
Further, we show that the solution can be computed in polynomial time using \cref{alg:p-parametric_matroid_impr_p_1} whenever an independence test can be performed in time polynomial in the input size.

In this section, we additionally introduce the following notation.
We write $\M_e$ for the matroid $(E-e,\Set{F\in\F\colon F\subseteq E-e})$ to indicate that element $e$ is removed.
The next definitions extend the definitions in \cite{hausbrandt2024parametric} to the multi-parametric case.
\begin{definition}[Most vital element] \label{def:mve_p_1}
    For an element $e \in E$, we denote by $B_\lambda^e$ a minimum weight basis of $\M_e$ at $\lambda\in I$.
    If $\M_e$ does not have a basis of rank $k$, we set $w(B_\lambda^e,\lambda)=\infty$ for all $\lambda\in I$.
    An element $e^*\in E$ is called \emph{most vital element} at $\lambda$ if $w(B_\lambda^{e^*},\lambda) \geq w(B_\lambda^e,\lambda)$ for all $e\in E$.
\end{definition}
In addition to the computation of a most vital element for each parameter vector $\lambda\in\I$, we are also
interested in the corresponding objective function value.
\begin{definition}[Optimal interdiction value function] \label{def:opt_ int_value_fct_p_1}
    For an element $e\in E$, we define the function $y_e$ via $y_e\colon I\to\mathbb{R}$, $\lambda\mapsto w(B_\lambda^e,\lambda)$ mapping the parameter vector $\lambda$ to the weight of a minimum weight basis of $\M_e$ at $\lambda$. 
    We denote the weight of an optimal one-interdicted minimum weight basis at $\lambda$ by $y(\lambda)\coloneqq\max\{y_e(\lambda)\colon\, e\in E\}$.
    The \emph{optimal interdiction value function} $y$ is then defined as $y\colon I\to\mathbb{R}$, $\lambda\mapsto y(\lambda)$.
\end{definition}
We are ready to formally state the $p$-parametric matroid one-interdiction problem.
\begin{problem}[$p$-Parametric matroid one-interdiction problem]\label{prob:p_1}
    Given a matroid~$\M$ with $p$-parametric weights $w(e,\lambda)$ and the parameter interval~$I\subseteq\R^p$, compute, for every $\lambda\in I$, a most vital element $e^*$ and the corresponding objective function value $y(\lambda) = y_{e^*}(\lambda)$.
\end{problem}
In \cref{prob:p_1}, the goal is to compute the upper envelope $y$, i.~e.\ the point-wise maximum, of all functions $y_e$ and, for each parameter vector $\lambda\in\I$, a maximizer $y_{e^*}$ with corresponding element $e^*$.
To exclude trivial cases, we make the following assumption in this section in addition to \cref{ass:p_1}.
\begin{assumption}\label{ass:additionally_interdiction_p_1}
    There exists a basis $B_\lambda^e$ with rank $k$ for every $e\in E$ and $\lambda\in I$.
\end{assumption}
If there is no basis with rank $k$ after interdicting an element $e\in E$, \cref{prob:p_1} can be solved trivially.
Therefore, \cref{ass:additionally_interdiction_p_1} can be made without loss of generality.
Consequently, each function $y_e$ is a continuous, piecewise-linear and concave optimal value function on $M_e$, with continuity being transferred to the optimal interdiction value function $y$.
Note, however, that in contrast to the graphs of the functions $y_e$, the graph of $y$ does not generally consist of $p$-dimensional polyhedra.
Instead, the graph of $y$ consists of a collection of $p$-dimensional surface patches, which are generally not convex.
It is crucial to bound the number of these $p$-dimensional surface patches in the graph of $y$, since the most vital element changes with each such patch.
The number thus serves as a natural complexity measure for the $p$-parametric matroid one-interdiction problem.

To obtain a polynomial bound on the number of $p$-dimensional surface patches in the graph of $y$, we use a theorem from \cite{sharir1994almost} that limits the complexity of the lower envelope of general surface patches.
Roughly spoken, the complexity of the lower envelope of surface patches is given by its number of faces of all dimensions.
By symmetry, we can apply the theorem of \cite{sharir1994almost} to bound the   number of $p$-dimensional surface patches in the upper envelope $y$.

Under an assumption that we consider in more detail in the subsequent proof, the theorem states the following.
The complexity of the upper envelope of $q$ surface patches in $\R^d$ is in $\O{q^{d-1+\varepsilon}}$, for any $\varepsilon>0$, i.~e.\ it is arbitrarily close to $\O{q^{d-1}}$.
\begin{theorem}\label{theo:bound_patches_of_y_p_1}
    The number of $p$-dimensional surface patches in the graph of the optimal interdiction value function $y$ is bounded by $\O{m^{(2p+1)(p+\varepsilon)}}$, for any $\varepsilon>0$.
\end{theorem}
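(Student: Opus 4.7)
The plan is to view $y$ as the upper envelope of the $m$ individual optimal value functions $y_e$ and then invoke the near-tight envelope bound of \cite{sharir1994almost}. First, I would observe that, by \cref{def:opt_ int_value_fct_p_1}, the graph of $y$ in $\R^{p+1}$ equals the upper envelope of the graphs of the functions $\{y_e : e \in E\}$. By \cref{ass:additionally_interdiction_p_1}, each $y_e$ is precisely the optimal value function of the $p$-parametric matroid problem on $\M_e$, which has $m-1$ ground set elements. Applying \cref{cor:opt_value_fct_p_1} to each $\M_e$ therefore guarantees that the graph of $y_e$ consists of at most $\O{(m-1)^{2p}} = \O{m^{2p}}$ many $p$-dimensional polyhedral patches. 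Taking the union over all $e \in E$ yields a family of
$$q \in \O{m \cdot m^{2p}} = \O{m^{2p+1}}$$
$p$-dimensional polyhedral surface patches in $\R^{p+1}$, whose upper envelope is exactly the graph of $y$.

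Next I would apply the main result of \cite{sharir1994almost}, which (in its symmetric upper-envelope form) states that the combinatorial complexity of the lower/upper envelope of $q$ surface patches of constant description complexity in $\R^{d}$ is $\O{q^{d-1+\varepsilon}}$ for every $\varepsilon > 0$. Setting $d=p+1$ and substituting $q \in \O{m^{2p+1}}$ gives an upper bound of
$$\O{q^{p+\varepsilon}} = \O{\bigl(m^{2p+1}\bigr)^{p+\varepsilon}} = \O{m^{(2p+1)(p+\varepsilon)}}$$
on the total combinatorial complexity of the upper envelope, and in particular on the number of its $p$-dimensional faces. Since each $p$-dimensional patch in the graph of $y$ corresponds to a distinct $p$-dimensional face of this envelope, the claimed bound follows.

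The main delicate point is verifying that the hypotheses of \cite{sharir1994almost} really apply. The theorem requires the patches to be semialgebraic of constant description complexity and to be in sufficiently general position. In our setting each patch is a piece of an affine hyperplane delimited by a bounded number of linear inequalities (independent of $m$), so the description complexity is indeed constant; the mild general-position requirement can be ensured by an infinitesimal symbolic perturbation, compatible with \cref{ass:p_1} and leaving the upper envelope of $y$ combinatorially unchanged. Switching between lower and upper envelopes amounts to a sign flip and does not affect Sharir's bound, so no further argument is needed.
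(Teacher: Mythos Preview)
Your proposal is essentially the paper's own proof: both view $y$ as the upper envelope of the $m$ functions $y_e$, use \cref{cor:opt_value_fct_p_1} to count $\O{m^{2p}}$ polyhedral patches per $y_e$ (hence $q\in\O{m^{2p+1}}$ in total), and then invoke the $\O{q^{d-1+\varepsilon}}$ bound of \cite{sharir1994almost} with $d=p+1$. The only difference is cosmetic---the paper spells out the semialgebraic form $\bigl(Q=0\bigr)\wedge B(R_1\sigma_1 0,\dots,R_u\sigma_u 0)$ required by Sharir and argues that each bounding half-space $h^{\geq}(e,f)$ or $h^{\leq}(e,f)$ yields a degree-one polynomial $R_i$, whereas you summarize this as ``constant description complexity''; one small caution is that your phrase ``a bounded number of linear inequalities (independent of $m$)'' asserts more than is actually verified in either argument, since a cell of $A(H^=)$ can have up to $\O{m^2}$ facets, so you may want to phrase this step exactly as the paper does rather than strengthening it.
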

\begin{proof}
    We require the number of $p$-dimensional surface patches in the upper envelope $y$ of the graphs of the functions $y_e$ for $e\in E$.
    The graph of each of these $m$ many functions contains at most $\O{m^{2p}}$ many $p$-dimensional polyhedra due to \cref{lem:opt_value_fct_concave_p_1}.
    The theorem of \cite{sharir1994almost} implies that the required number of surface patches is in $\O{m^{(2p+1)(p+\varepsilon)}}$, for any $\varepsilon>0$, provided we have the following assumption.
    Each $p$-dimensional polyhedron $P$ in the graph of a function $y_e$ is of the form $\bigl(Q=0\bigr) \wedge \bigl(B\left(R_1\sigma_1 0,\dotsc,R_u\sigma_u 0\right)\bigr),$ where $u$ is a constant, $\sigma_1,\dotsc,\sigma_u\in\Set{\geq,\leq}$, $B$ is a Boolean formula, $Q$ and $R_1,\dotsc,R_u \in \R[x_1,\dotsc,x_{p+1}]$ are polynomials and the degrees of $Q$ and $R_1,\dotsc,R_u$ are of constant size.
    
    For a given instance of \cref{prob:p_1} and a given polyhedron $P$ in the graph of a function $y_e$, there is a fixed number of hyperplanes $h_1,\dotsc,h_u\in H^=$ defining $P$.
    Further, if $h(e,f)\in H^=$ is a separating hyperplane constituting a $(p-1)$-dimensional face of $P$, then one of the half-spaces $h^\geq(e,f)\coloneqq\Set{\lambda\in\R^p\colon w(e,\lambda)\geq w(f,\lambda)}$ or $h^\leq(e,f)\coloneqq\Set{\lambda\in\R^p\colon w(e,\lambda)\leq w(f,\lambda)}$ implies a polynomial of the required form.
    Without loss of generality, let $P$ be contained in the half-space $h^\geq(e,f)$.
    From $w(e,\lambda)\geq w(f,\lambda)$, we obtain the inequality $$R\coloneqq a_e-a_f+\lambda_1(b_{1,e}-b_{1,f})+\dotsc+\lambda_p (b_{p,e}-b_{p,f})\geq 0,$$ where $R$ is a polynomial of degree one of the desired form.
    To summarize, $P$ can be represented in the required form and the theorem of \cite{sharir1994almost} proves the bound.
\end{proof}
We now develop an algorithm that solves \cref{prob:p_1} in polynomial time whenever an independence test can be performed in time polynomial in the input size.
We first use \cref{alg:p-parametric_matroid_impr_p_1} on the matroid $\M_e$ for each $e\in E$ to obtain the functions $y_e$.
Since a most vital element $e^*$ at $\lambda\in\I$ is an element for which $y_{e^*}(\lambda)=y(\lambda)$, it suffices to compute the upper envelope $y$ of the functions $y_e$.
Unfortunately, in dimension $d\geq4$, there exist only algorithms for the computation of the upper envelope of surface patches if these are simplices, i.~e.\ if they have exactly $d+1$ vertices (\cite{agarwal2000arrangements}).
Consequently, we cannot compute the upper envelope $y$ directly.
Instead, we have to triangulate each polyhedron in the graph of all functions $y_e$, meaning we have to decompose them into simplices.
For more details on triangulations, we refer the reader to the textbook of \cite{deloera2010triangulations}.
Afterwards, we use the algorithm of \cite{edelsbrunner1989upper} to compute the upper envelope of all simplices obtained.
This provides us with a representation of $y$, which may consist of more $p$-dimensional surface patches than necessary due to the triangulations of the polyhedra in the graphs of the functions $y_e$.
Clearly, this may increase the asymptotic running time of our algorithm.
However, we show in the next theorem that polynomial running time is retained.
Note, that we store the corresponding element $e$ for each simplex of the graph of a function $y_e$.
This allows us to obtain a most vital element for each $\lambda\in\I$ during the computation of the upper envelope $y$.

The procedure is summarized in \cref{alg:p-parametric_interdiction_p_1}.
\cref{theo:algo_interdiction_p_1} proves correctness and analyzes its running time.

\begin{algorithm}[t]
    \LinesNumbered
    \Input{A matroid \M{} with $p$-parametric weights $w(e,\lambda)$ and parameter interval~$I \subseteq \mathbb{R}^p$.}
    \Output{A representation of the upper envelope of $y$.}
    
    \For{each $e\in E$}{
        Use \cref{alg:p-parametric_matroid_impr_p_1} on $\M_e$ to obtain the function $y_e$\;
    }
    Triangulate each polyhedron in the graph of each function $y_e$\;
    Store the corresponding element $e$ for each simplex obtained\;
    Compute the upper envelope $y$ of all simplices of all graphs of the functions $y_e$\;
    \Return $y$\;
    \caption{A polynomial time algorithm for the $p$-parametric matroid one-interdiction problem.}
    \label{alg:p-parametric_interdiction_p_1}
\end{algorithm}

\begin{theorem}\label{theo:algo_interdiction_p_1}
    \cref{alg:p-parametric_interdiction_p_1} correctly solves the $p$-parametric matroid one-interdiction problem in time $\mathcal{O}\bigl( m^{2p+1} \bigl( m^{p^3+3p^2-p-1} + f(m) \bigr)\bigr)$.
\end{theorem}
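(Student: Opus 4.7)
My plan is to first argue that the algorithm produces a valid representation of the optimal interdiction value function $y$ together with a most vital element for each $\lambda\in I$. For every $e\in E$, \cref{alg:p-parametric_matroid_impr_p_1} applied to the deleted matroid $\M_e$ yields a decomposition of $I$ into the cells of the relevant arrangement together with a minimum weight basis of $\M_e$ for each cell; by \cref{theo:correct_param_matroid_algo_p_1}, this correctly encodes the piecewise-linear concave function $y_e$. Triangulating every $p$-dimensional polyhedron in the graph of $y_e$ into $p$-simplices preserves the graph as a point set, so replacing the polyhedra by their simplices keeps the pointwise maximum $y=\max_{e\in E} y_e$ unchanged. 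Since Edelsbrunner's upper-envelope algorithm produces every maximal region on which one of the input simplices lies on top, the element label stored for each simplex immediately identifies a most vital element at every $\lambda$ in the output.

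\textbf{Running time.} For the running time, the first step is the loop, which calls \cref{alg:p-parametric_matroid_impr_p_1} on each of the $m$ matroids $\M_e$. By \cref{theo:correct_param_matroid_algo_p_1}, each call costs $\O{m^{2p} f(m)}$, so the loop contributes $\O{m^{2p+1} f(m)}$, which is precisely the first summand in the claimed bound. Next, each function $y_e$ has at most $\O{m^{2p}}$ $p$-dimensional polyhedra (\cref{cor:opt_value_fct_p_1}), each being (the graph over) a cell in an arrangement of $\O{m^2}$ hyperplanes in $\R^p$. The upper bound theorem gives a polynomial bound on the number of vertices of such a cell and, consequently, on the number of $p$-simplices needed to triangulate it; multiplying by the number of polyhedra and by $m$ (over all $e\in E$) produces the total number $N$ of simplices fed into the upper-envelope subroutine, which is $\mathrm{poly}(m)$.

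\textbf{Final step and main obstacle.} The last step invokes the upper-envelope algorithm of \cite{edelsbrunner1989upper} on the $N$ simplices in $\R^{p+1}$; its running time is polynomial in $N$, and substituting the bound on $N$ together with the $m^{2p+1}$ factor separated from the loop yields the announced bound $\mathcal{O}\bigl(m^{2p+1}(m^{p^3+3p^2-p-1}+f(m))\bigr)$. The main obstacle is bookkeeping rather than conceptual: I must combine the per-polyhedron vertex bound from the upper bound theorem, the cost of triangulating a convex polytope in $\R^p$, and the polynomial complexity of the upper-envelope subroutine as a function of the simplex count, and then reduce the resulting exponent to the clean form $p^3+3p^2-p-1$. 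That the upper envelope of the triangulated surfaces coincides with $y$ itself requires no extra argument, because each simplex lies on the graph of the same linear piece of some $y_e$, so pointwise maximization commutes with the refinement induced by the triangulation.
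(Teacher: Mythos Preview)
Your approach is exactly the paper's: compute each $y_e$ with \cref{alg:p-parametric_matroid_impr_p_1}, triangulate the $p$-dimensional polyhedra in every graph, and feed all resulting simplices to the upper-envelope algorithm of \cite{edelsbrunner1989upper}. The correctness paragraph is fine and matches the paper's reasoning.

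The gap is that you never carry out the running-time computation you yourself flag as the ``main obstacle''. The theorem asserts a specific exponent, so ``polynomial in $N$'' together with ``reduce the resulting exponent to the clean form $p^3+3p^2-p-1$'' is not yet a proof of the stated bound. The paper supplies the three quantitative ingredients you leave implicit: the number of vertices of a single cell of $A(H^=)$ is at most $\binom{m^2}{p}\in\O{m^{2p}}$, obtained from \cref{theo:number_of_faces_p_1} with $t=0$ (not the polytope upper bound theorem you invoke); a convex $p$-polytope on $s$ vertices can be triangulated into $\O{s^{\lfloor(p+2)/2\rfloor}}$ simplices, with the triangulation itself computable in the time given in \cite{deloera2010triangulations}; and the algorithm of \cite{edelsbrunner1989upper} runs in $\O{r^{p}\alpha(r)}$ on $r$ many $p$-simplices in $\R^{p+1}$. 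It is precisely the $r^{p}$ dependence of this last step, applied to the total simplex count over all $m$ graphs and then factored as $m^{2p+1}\cdot m^{p^3+3p^2-p-1}$, that produces the exponent in the statement. Until you actually perform that substitution your sketch establishes only that the algorithm runs in polynomial time, not the claimed bound.
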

\begin{proof}
    For each $e\in E$, we compute the function $y_e$ on $M_e$ with \cref{alg:p-parametric_matroid_impr_p_1} in $\O{m^{2p}f(m)}$ time.
    In order to compute their upper envelope $y$, we first have to triangulate each polyhedron in the graph of each function $y_e$.
    It follows from \cite[Theorem~2.6.1]{deloera2010triangulations} that the number of simplices needed to triangulate one of the $p$-dimensional polyhedra is bounded by $\O{s^{\lfloor\frac{p+2}{2}\rfloor}}$, where $s$ is the number of vertices of the corresponding polyhedron.
    This number equals the number of 0-faces of the corresponding cell in the arrangement $A(H^=)$.
    \cref{theo:number_of_faces_p_1} implies that $s$ is bounded by $\binom{m^2}{p} \sum_{i=0}^0\binom{m^2-p-1+0}{i} \in \O{m^{2p}}$ yielding $\O{m^{p(p+2)}}$ many simplices per polyhedron.
    Then, by Lemma~8.2.2 from \cite{deloera2010triangulations} it follows that one of the polyhedra can be triangulated in time
    $$\O{(p+2)\cdot (m^{2p})^2 \cdot m^{p(p+2)}}=\O{p\cdot m^{p(p+6)}}.$$
    According to \cref{cor:opt_value_fct_p_1}, there are a total of $\O{m\cdot m^{2p}}$ many $p$-dimensional polyhedra, which leads to a total running time of $\O{m^{2p+1}\cdot p\cdot m^{p(p+6)}}=\O{p\cdot m^{p^2+8p+1}}$ for the triangulations.

    The number of simplices in all graphs of all functions $y_e$ is bounded by $\O{m^{p^2+3p+1}}$.
    The upper envelope of $r$ many $p$-dimensional simplices in $\R^{p+1}$ can be computed with the algorithm of \cite{edelsbrunner1989upper} in $\O{r^p\alpha(r)}$ time, cf.\ \cite{agarwal2000arrangements}.
    We obtain a running time of
    $$\O{(m^{p^2+3p+1})^p\cdot \alpha(m^{p^2+3p+1})}=\O{m^{p(p^2+3p+1)}\alpha(m)}$$ for the computation of the upper envelope $y$.
    Once we have a representation of $y$, we directly obtain, for each $p$-dimensional surface patch in the graph of $y$, an element that is a most vital element for all values within the corresponding surface patch.
    In summary, \cref{alg:p-parametric_interdiction_p_1} runs in time
    \begin{align*}
        &\mathcal{O}\bigl(m\cdot m^{2p}f(m) + p\cdot m^{p^2+8p+1} + m^{p(p^2+3p+1)}\alpha(m)\bigr) \\
        =\; &\mathcal{O}\bigl( m^{2p+1} \bigl( m^{p^3+3p^2-p-1} + f(m) \bigr)\bigr).
    \end{align*}
\end{proof}
To summarize, a solution of the $p$-parametric matroid one-interdiction problem consists of a subdivision of the parameter interval $\I\subseteq\R^p$ into polynomially many subsets with one corresponding most vital element per subset.
Furthermore, the solution can be computed with a polynomial number of independence tests.

\section{Conclusion}
This article proposes an algorithm that solves the multi-parametric matroid problem with a polynomial number of independence tests.
We provide the first method that works for arbitrary matroids, where each element has a weight that depends linearly on an arbitrary but fixed number of parameters coming from real intervals.
We show that a solution to the problem consists of a polynomial number of polyhedra decomposing the multi-dimensional parameter interval such that each polyhedron is associated with a unique basis that is optimal for all parameter vectors within the corresponding polyhedron.
Applied in a multi-objective context, our algorithm computes the set of all extreme supported bases as well as the weight set decomposition for the multi-objective matroid problem.
For the special case of minimum spanning trees, we obtain an asymptotically faster algorithm than the methods that allow for an explicit computation of their running time, cf.\ \cite{seipp2013adjacency, correia2021finding}.
Finally, we showed that our algorithm can be used to solve the multi-parametric matroid one-interdiction problem.
Instead of a minimum weight basis, the goal is to compute, for each parameter vector, one element that maximizes its weight when removed from the matroid.
Whenever an independence test can be performed in time polynomial in the input size of the problem, we obtain polynomial running time for this special matroid interdiction variant.

\section*{Acknowledgments}
This work was partially supported by the project "Ageing Smart -- Räume intelligent gestalten" funded by the Carl Zeiss Foundation and the DFG grant RU 1524/8-1, project number 508981269.
The authors would like to thank Levin Nemesch for helpful discussions and the resulting improvement of the article.

\bibliography{bib_matroid}
	
\end{document}